\renewcommand{\@biblabel}[1]{#1.\hfill}
\begin{document}
%%%%%%%%%%%%% begin theorem definition %%%%%%%%%%%%%%%%%%
\newtheoremstyle{mytheorem}
  {\topsep}   % ABOVESPACE
  {\topsep}   % BELOWSPACE
  {\itshape}  % BODYFONT
  {}       % INDENT (empty value is the same as 0pt)
  {\bfseries} % HEADFONT
  {  }         % HEADPUNCT
  {5pt plus 1pt minus 1pt} % HEADSPACE
  { }          % CUSTOM-HEAD-SPEC
\newtheoremstyle{myremark}
  {\topsep}   % ABOVESPACE
  {\topsep}   % BELOWSPACE
  {\upshape}  % BODYFONT
  {}       % INDENT (empty value is the same as 0pt)
  {\bfseries} % HEADFONT
  {  }         % HEADPUNCT
  {5pt plus 0pt minus 1pt} % HEADSPACE
  {}          % CUSTOM-HEAD-SPEC\cite{}
\theoremstyle{mytheorem}
\newtheorem*{A}{Heyde theorem}
\newtheorem{theorem}{Theorem}[section]
 \newtheorem{theorema}{Theorem}
 \renewcommand{\thetheorema}{\Alph{theorema}}
 \newtheorem{proposition}[theorem]{Proposition}
 \newtheorem{lemma}[theorem]{Lemma}
\newtheorem{corollary}[theorem]{Corollary}
\newtheorem{definition}[theorem]{Definition}
\theoremstyle{myremark}
\newtheorem{remark}[theorem]{Remark}
%%%%%%%%%%%%%%%%%%%%% end theorem definition %%%%%%%%%%%%%%%%%%
\noindent{\Large\textbf{An Analogue of Heyde's Theorem for Discrete Torsion}}

\medskip

\noindent{\Large\textbf{Abelian Groups with Cyclic $p$-Components}}

\bigskip

\noindent\textbf{Gennadiy Feldman} (ORCID ID  https://orcid.org/0000-0001-5163-4079)

\bigskip

\noindent\textbf{Abstract} 

\noindent According to the well-known Heyde theorem, the  
Gaussian distribution  on the real line is characterized by 
the symmetry of the conditional  distribution of one linear form of 
$n$ independent random variables given another. In the article, we prove an 
analogue  of this theorem for two independent random variables  taking values
in a discrete torsion Abelian group $X$ with cyclic $p$-components.
 In doing so, we do not 
impose any restrictions on  coefficients of the linear forms 
 and the characteristic functions of random variables. The proof 
 uses methods of abstract harmonic analysis and is based on the solution 
 some functional equation on the character group of the group $X$.

\bigskip

\noindent\textbf{Keywords} Heyde  theorem $\cdot$   
  automorphism $\cdot$ discrete torsion Abelian group 
  
  \bigskip
  
\noindent \textbf{Mathematics Subject Classification}  Primary 
 43A25 $\cdot$  43A35 $\cdot$ 60B15 $\cdot$ 62E10 

\bigskip

\section { Introduction}

Characterization theorems in mathematical statistics are statements in which the 
description of possible distributions of random variables follows from properties 
of some functions of these variables. One of the most famous characterization 
theorems was proved independently by M.~Kac and S.N.~Bernstein: if the sum and difference of two independent random variables are independent, 
then the random variables are Gaussian. Many characterization theorems have been studied 
in the case when random variables took values in a locally compact Abelian group.
  Among theorems whose 
group analogues 
are being actively studied   
  is the well-known Heyde theorem \cite{He}, 
see also \cite[Theorem 13.4.1]{KaLiRa}.
 For two independent random variables this theorem  can be 
 formulated as follows: 
 \begin{A}{Let
 $\xi_1$ and $\xi_2$ be real-valued independent random
 variables with distributions $\mu_j$. Let $\alpha_j$, $\beta_j$ be 
 nonzero constants such that
 $\beta_1\alpha_1^{-1}+\beta_2\alpha_2^{-1} \ne 0$.   
 If the conditional distribution of the linear form $L_2 = \beta_1\xi_1 +
   \beta_2\xi_2$   given $L_1 = \alpha_1\xi_1 +  \alpha_2\xi_2$ is 
   symmetric\footnote{We note that the conditional  distribution of  $L_2$ 
   given $L_1$ is symmetric
if and only if the random vectors $(L_1, L_2)$ and $(L_1, -L_2)$ are 
identically distributed.}, 
   then   $\mu_j$ are Gaussian distributions.}
 \end{A}

A group analogue of Heyde's theorem was first proved in 
\cite{F2004} for finite Abelian groups. Other classes of locally compact Abelian groups 
were then studied as well, in particular discrete groups, compact totally disconnected groups, 
${\bm a}$-adic solenoids\footnote{On group analogues of Heyde's theorem see, 
e.g., \cite{ {F_Heyde_new}, 
 {FeTVP}, {Rima}, {JFAA2024},
 {M2023}, {M2025}}, and also \cite{Febooknew}, where one can find additional references.}. 
In doing so, it is usually assumed that the 
coefficients of linear forms are topological automorphisms of the group. It is 
easy to see that in this case we can suppose, without loss of generality, 
 that $L_1 = \xi_1 + \xi_2$ and 
$L_2 = \xi_1 + \alpha\xi_2$, where $\alpha$ is a topological 
automorphism of the group. It turns out that for a given locally compact Abelian 
group, the description of distributions 
that are characterized by the 
symmetry of the conditional distribution of one linear form given another 
depends on whether the kernel $\mathrm{Ker}(I+\alpha)$
 is zero and on whether the characteristic functions of the distributions 
$\mu_j$ vanish.

In article \cite{F_Heyde_new}, for the first time a group analogue of 
Heyde's theorem was studied without any restrictions on $\alpha$ and the characteristic 
functions of the distributions $\mu_j$.
In \cite{F_Heyde_new}
independent random variables  take values either in a 
compact totally disconnected Abelian group of a certain class 
or in a $p$-quasicyclic group.

In the present article we prove an analogue  of Heyde's theorem for two
independent random variables taking values either in a discrete torsion Abelian 
group  with cyclic $p$-components or in the direct product of the group of real 
numbers and a discrete torsion Abelian group  with cyclic $p$-components.  
As in article \cite{F_Heyde_new}, we do not impose any restrictions    
on   $\alpha$ and the characteristic functions of the distributions $\mu_j$.
It should be noted that the original Heyde's theorem was proven for an arbitrary 
number $n$ of independent random variables, subject to certain restrictions on the 
coefficients of the linear forms. However, as proven in 
\cite{F2004}, even for finite Abelian groups, roughly speaking, 
when $n>2$, a reasonable analogue of Heyde's theorem does not exist.

We would like to emphasize that despite the probabilistic-statistical formulation, 
the problems under 
consideration are problems of abstract harmonic analysis. Description of possible 
distributions $\mu_j$ in a group analogue of Heyde's theorem for a locally 
compact Abelian group $X$ is equivalent to description of solutions of so called
Heyde's functional 
equation in the class of characteristic functions (Fourier transform) 
of probability 
distributions on the character group of the group $X$. It should be noted that the Heyde 
equation is in the same series  with such well-known functional equations 
as the Kac-Bernstein and the Skitovich-Darmois equations, which are closely 
related to the corresponding characterization theorems (see 
\cite[Chapters II and III]{Febooknew}). These and similar equations 
have been studied by many authors independently of characterization problems
(see, e.g., J.M.~Almira \cite{A1}, \cite{A2}, 
J.M.~Almira and E.V.~Shulman \cite{AS1}, 
M.~Sablik and E.~Shulman \cite{SaSh1}, E.V.~Shulman \cite{Sh1}).

In the article we use standard results of abstract harmonic analysis 
 (see, e.g., \cite{Hewitt-Ross}). Let $X$ be a locally compact Abelian group and $Y$ be 
 its the character
group. We also use the notation $X^*$ for the character group of $X$. 
Let  $x \in X$ and $y \in Y$.  Denote by  $(x,y)$ the value of 
the character $y$ at the element  $x$. For a  subgroup $K$ of the group $X$,  denote by
$A(Y, K) = \{y \in Y: (x, y) =1$ {for all} $x\in
K\}$
its annihilator. 
 Denote by $\mathrm{Aut}(X)$ the group
of all topological automorphisms of $X$  and by  $I$ the identity automorphism.   
Let $G$ be a closed subgroup of the group $X$ and let $\alpha\in\mathrm{Aut}(X)$.
If $\alpha(G)=G$, i.e., the restriction  of  $\alpha$ to $G$ is a topological 
automorphism of the group   $G$, then we denote by   $\alpha_{G}$ this restriction.
A closed subgroup $G$ of $X$ is called characteristic if $\alpha(G)=G$ for 
all $\alpha\in\mathrm{Aut}(X)$. 
For $\alpha\in\mathrm{Aut}(X)$, the adjoint automorphism 
$\widetilde\alpha\in \mathrm{Aut}(Y)$
is defined by the formula $(\alpha x,
y)=(x, \widetilde\alpha y)$ for all $x\in X$, $y\in
Y$.  For a natural $n$, 
denote by $f_n$ 
the  endomorphism of the group $X$
 defined by the formula:  $f_nx=nx$, $x\in X$. Put $X^{(n)}=f_n(X)$.
 Let $x\in X$ be an element  of finite 
order. Denote by $\langle x\rangle$ the subgroup of $X$ generated by $x$.
Denote by $\mathbb{R}$ the group of real numbers and by
 $\mathbb{Z}(n)$ the  group of the integers 
modulo $n$, i.e., the finite cyclic group of order $n$.

Let $Y$ be an Abelian group, let $f(y)$ be a function on  $Y$, and let $h$ be an
element of $Y$. Denote by $\Delta_h$ the finite difference operator 
$$\Delta_h f(y)=f(y+h)-f(y), \quad y\in Y.$$ A function $f(y)$ on $Y$ is called
a  polynomial  if
$$\Delta_{h}^{n+1}f(y)=0$$ for some nonnegative integer 
$n$ and all $y, h \in Y$.

Let $X$ be a locally compact Abelian group and let $\mu$ and $\nu$ be probability 
distributions on $X$. The convolution
$\mu*\nu$ is defined by the formula
$$
\mu*\nu(B)=\int\limits_{X}\mu(B-x)d \nu(x)
$$
for any Borel subset $B$ of $X$. 
Denote by
$$
\hat\mu(y) =
\int\limits_{X}(x, y)d \mu(x), \quad y\in Y,$$   
the characteristic function 
(Fourier transform) of 
the distribution $\mu$.
 Define the distribution 
 $\bar \mu $ by the formula
 $\bar \mu(B) = \mu(-B)$ for any Borel  subset $B$ of $X$.
Then $\hat{\bar{\mu}}(y)=\overline{\hat\mu(y)}$. A distribution   
$\mu_1$ on the group $X$ is called a {factor} of $\mu$  if there is a distribution   
$\mu_2$ on $X$ such that $\mu=\mu_1*\mu_2$.
For $x\in X$, denote by $E_x$  the degenerate distribution
 concentrated at the element $x$. 
We say that a function $\psi(y)$ on the group
$Y$ is characteristic if there is a distribution $\mu$ on the group $X$ such that
$\psi(y)=\hat\mu(y)$ for all $y\in Y$. 
For a compact subgroup $K$ of the group $X$ denote by $m_K$ the Haar distribution on $K$. 
The
characteristic function  $\widehat m_K(y)$ is of the form
\begin{equation}
\label{fe22.1}\widehat m_K(y)=
\begin{cases}
1 & \text{\ if\ }\ \ y\in A(Y,K),
\\ 0 & \text{\ if\ }\ \ y\not\in
A(Y,K).
\end{cases}
\end{equation}

\section{ Main theorem}

Let $\{H_\iota: \iota\in {\mathcal I}\}$ be a family of
discrete Abelian groups. Denote by
$\mathop{\mathbf{P}^*}\limits_{\iota \in {\mathcal I}}H_\iota$ 
the {weak direct product of the
groups} $H_\iota$, considering in the discrete topology. 
Let $\{G_\iota: \iota\in {\mathcal I}\}$ be a family of
compact Abelian groups. Denote by $\mathop\mathbf{P}\limits_{\iota \in
	{\mathcal I}}G_\iota$  the {direct product of the groups}
$G_\iota$ considering in the product topology. 
Let $X$ be an Abelian group and let $p$ be a prime number.
A group  $X$ is called  $p$-{group}  if the order of every 
element of $X$ is a power of $p$. If
 $X$ is a torsion group,
the subgroup of $X$ consisting of all elements of $X$ whose order is a power of 
$p$ is called the $p$-component of $X$. 

The main result of the article is the following theorem.

\begin{theorem}\label{tn1} Let $X$ be a discrete torsion Abelian group 
with cyclic $p$-components containing no elements of order $2$.
 Let  $\alpha$ be an automorphism of the group $X$. 
 Let $\xi_1$ and $\xi_2$ be independent random variables with values in $X$ 
 and distributions $\mu_1$ and $\mu_2$. Assume that the conditional  distribution 
 of the linear form $L_2 = \xi_1 + \alpha\xi_2$ given $L_1 = \xi_1 + \xi_2$ is 
symmetric. Then  there is a subgroup $G$ of the group $X$ and  
a distribution $\lambda$ supported in $G$ such that 
the following statements are true:
\renewcommand{\labelenumi}{\rm(\roman{enumi})}
\begin{enumerate}
\item
$\mu_j$ are shifts of $\lambda$;
	\item
$G$ is the minimal subgroup containing the support of $\lambda$;
\item
$(I+\alpha)(G)$ is a finite group;
\item
the Haar distribution $m_{(I+\alpha)(G)}$ is a factor of  $\lambda$;
\item
if $\eta_j$ are independent identically distributed random variables 
with values in    
the group $X$  and distribution  $\lambda$, then    the conditional  
distribution of the linear form
$M_2=\eta_1 + \alpha\eta_2$ given $M_1=\eta_1 + \eta_2$ is symmetric.
\end{enumerate} 
\end{theorem}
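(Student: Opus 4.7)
The plan is to reformulate the symmetry hypothesis as a functional equation on the character group $Y=X^*$, analyze it using the pro-$p$-cyclic structure of $Y$, and then read off the structural assertions (i)--(v).

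First, the conditional symmetry of $L_2$ given $L_1$ is equivalent to the Heyde functional equation
\begin{equation*}
\widehat\mu_1(u+v)\,\widehat\mu_2(u+\widetilde\alpha v)=\widehat\mu_1(u-v)\,\widehat\mu_2(u-\widetilde\alpha v),\quad u,v\in Y.
\end{equation*}
By duality $Y$ is a compact totally disconnected Abelian group that factors as a direct product over primes, with each $p$-component being either a finite cyclic $p$-group or topologically isomorphic to the group of $p$-adic integers, and with trivial $2$-component. Each $p$-component is $\widetilde\alpha$-invariant. Because $X$ has no elements of order $2$, multiplication by $2$ is a topological automorphism of $Y$, so the ``halving'' substitutions used in the classical treatment of the Heyde equation remain available.

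The next step is symmetrization: set $\nu_j:=\mu_j*\bar\mu_j$, so that $\widehat\nu_j=|\widehat\mu_j|^2\ge 0$. Conjugating the equation and multiplying shows that the $\nu_j$ also satisfy it. Substituting $u=v$ and using invertibility of $2$ on $Y$ yields a master relation
\begin{equation*}
\widehat\nu_1(w)\,\widehat\nu_2(Aw)=\widehat\nu_2(Bw),\quad w\in Y,
\end{equation*}
with $A=(I+\widetilde\alpha)/2$ and $B=(I-\widetilde\alpha)/2$, together with its analogue obtained by swapping $\mu_1$ and $\mu_2$ and replacing $\widetilde\alpha$ by $\widetilde\alpha^{-1}$. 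Iterating these two relations on the open set where $\widehat\nu_j$ does not vanish, and using continuity of $\widehat\nu_j$ on the compact $Y$, should force $\{y\in Y:\widehat\nu_j(y)\ne 0\}$ to be contained in the annihilator $A(Y,(I+\alpha)(G))$ of a finite subgroup $(I+\alpha)(G)$, where $G$ is the minimal subgroup of $X$ containing $\mathrm{supp}\,\nu_j$. This gives the Haar factor $m_{(I+\alpha)(G)}$ of $\nu_j$. Using the full equation for $\mu_j$ to fix the relative phases on the coset structure of $Y$, recover $\mu_j=E_{x_j}*\lambda$ for a common $\lambda$ inheriting the Haar factor; items (i)--(iv) follow. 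Item (v) is the direct substitution verifying that $\lambda$ inherits the Heyde equation from the $\mu_j$.

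The principal obstacle is the extraction of the finiteness of $(I+\alpha)(G)$ from the master relation. The behavior of $I+\widetilde\alpha$ on each $p$-component of $Y$ falls into three regimes (topological automorphism, finite image, or zero), and in the $p$-quasicyclic component the analysis reduces to the dichotomy already handled in \cite{F_Heyde_new}. In our setting, however, the subgroup $G$ can cut across infinitely many primes, and the iteration must shrink the support of $\widehat\nu_j$ consistently across all components at once; otherwise $\widehat\nu_j$ would vanish on an open neighborhood of $0$, contradicting $\widehat\nu_j(0)=1$. Carrying out this bookkeeping uniformly over primes, by an induction on the $p$-height of elements and exploiting the cyclic pro-$p$ structure of each $Y_p$, is the main technical content of the proof.
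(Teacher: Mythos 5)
Your proposal sets up the right framework (passing to the Heyde equation on the compact dual $Y$, symmetrizing via $\nu_j=\mu_j*\bar\mu_j$, exploiting the prime-by-prime product structure and the invertibility of $f_2$), but it stops exactly where the proof actually begins: you yourself flag that forcing the support of $\hat\nu_j$ into the annihilator of a finite subgroup is ``the main technical content,'' and the iteration of your master relation is never carried out. There are two concrete missing mechanisms. First, to show that $\hat\mu_j$ vanishes off $\mathrm{Ker}(I+\widetilde\alpha)$ (which is what yields the common $\lambda$ in (i) and the Haar factor in (iv)), the paper does not iterate the $u=v$ substitution; it takes an arbitrary character $y_0\notin\mathrm{Ker}(I+\widetilde\alpha)$ of finite order, restricts the whole problem to the finite cyclic group $S$ dual to $T=\langle y_0\rangle$, and invokes a previously established Heyde-type result for finite cyclic groups of odd order (Lemma \ref{nth1}, a special case of a result from \cite{F_Heyde_new}) to conclude $\hat\mu_j(y_0)=0$; infinite-order characters are then handled by approximating with the finite-order truncations $y_0^{(n)}\to y_0$ and using continuity. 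Nothing in your sketch substitutes for this reduction to the cyclic case, and your phrase ``fix the relative phases on the coset structure'' does not explain why $\hat\mu_1=\hat\mu_2$ off the kernel (where both are in fact zero) or on it (where the restriction of the equation to $\mathrm{Ker}(I+\widetilde\alpha)$ gives equality directly).

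Second, the finiteness of $(I+\alpha)(G)$ is obtained in the paper by a completely different device than your proposed support-shrinking induction: one first shows $\mathrm{Ker}(I-\widetilde\alpha)=\{0\}$, hence $I-\widetilde\alpha\in\mathrm{Aut}(Y)$, then picks a compact open subgroup $Y_l$ (from the natural neighborhood base $Y_k=\mathop{\mathbf{P}}_{p_j\ge k}\mathbb{Z}(p_j^{k_j})$) on which $\hat\nu>0$, sets $\varphi=\ln\hat\nu$, and applies the finite-difference identity of Lemma \ref{newle18.01.2} together with Lemma \ref{s5.6} (a continuous polynomial on a compact group is constant) to conclude $(I+\widetilde\alpha)(Y_l)=\{0\}$, i.e.\ $\alpha=-I$ on $X_l$, so that $(I+\alpha)(X)$ is the finite product over the remaining primes $p_j<l$. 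Your bookkeeping ``uniformly over primes by induction on $p$-height'' is not a proof and, as stated, does not obviously converge: the master relation alone does not preclude $\hat\nu_j$ being positive on a large set without $(I+\widetilde\alpha)$ annihilating a neighborhood of zero. A further small but real slip: since the $p$-components of $X$ are cyclic, hence finite, the dual components are all finite cyclic groups $\mathbb{Z}(p_j^{k_j})$; groups of $p$-adic integers do not occur (they are dual to quasicyclic components, which are excluded here), and the finiteness of the dual components is exactly what makes the reduction to Lemma \ref{nth1} and the neighborhood base $\{Y_k\}$ available. You should also make explicit the preliminary reduction via Lemma \ref{lem11} (shifting the $\mu_j$ into $G=A(X,H)$ with $H=\{y:|\hat\mu_1(y)|=|\hat\mu_2(y)|=1\}$), which is what legitimizes assuming $H=\{0\}$ and aiming for $\mu_1=\mu_2$ rather than merely shifts.
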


For the proof of the theorem we need some lemmas.
It is convenient for us to formulate   the following   well-known 
statement  in the form as a lemma 
(for the proof see, e.g., \cite[Proposition 2.10]{Febooknew}).
\begin{lemma}\label{lem2}  Let $X$ be a locally compact Abelian group  
with character group
 $Y$
and let $\mu$ be a distribution on $X$. Then the sets 
$$E=\{y\in Y:  \hat\mu(y)=1\}, \quad
B=\{y\in Y:  |\hat\mu(y)|=1\}$$ are closed subgroups of the group $Y$, and 
the distribution   $\mu$ is supported in the subgroup $A(X,E)$.
\end{lemma}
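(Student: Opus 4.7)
The plan is to base everything on one elementary observation: if $f\colon X\to\mathbb{C}$ is measurable with $|f|\le 1$ and $\int_X f\,d\mu = 1$, then $f = 1$ $\mu$-almost everywhere. Indeed, $\int(1 - \operatorname{Re} f)\,d\mu = 0$ with a nonnegative integrand forces $\operatorname{Re} f = 1$ $\mu$-a.e., and together with $|f|\le 1$ this yields $f = 1$ $\mu$-a.e.

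Closedness of $E$ and $B$ is immediate from the continuity of $\hat\mu$ on $Y$. For the subgroup property of $E$: given $y\in E$ I apply the observation to $f(x)=(x,y)$, whose modulus is identically $1$, obtaining $(x,y)=1$ for $\mu$-a.e.\ $x$. Then for $y_1,y_2\in E$, outside a $\mu$-null set we have $(x,y_1+y_2)=(x,y_1)(x,y_2)=1$, and integrating gives $\hat\mu(y_1+y_2)=1$; the verifications $-y\in E$ and $0\in E$ are analogous. For $B$ the same scheme works with a phase: if $y\in B$, write $\hat\mu(y)=e^{i\theta}$ and apply the observation to $e^{-i\theta}(x,y)$ to conclude $(x,y)=e^{i\theta}$ for $\mu$-a.e.\ $x$; products of such $\mu$-a.e.\ constants again have modulus one, and this gives $|\hat\mu(y_1+y_2)|=1$, $|\hat\mu(-y)|=1$, and $0\in B$.

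For the support statement I argue topologically. For each $y\in E$ the set $U_y=\{x\in X:(x,y)\ne 1\}$ is the complement of the closed set $\{x:(x,y)=1\}$, hence open, and by the previous paragraph $\mu(U_y)=0$. Since $\operatorname{supp}\mu$ meets every open set of positive $\mu$-measure, it is disjoint from each $U_y$, hence contained in
$$\bigcap_{y\in E}\{x\in X:(x,y)=1\}=A(X,E),$$
which gives the desired conclusion.

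The only point that needs attention — and which I would flag as the main obstacle, although it is a mild one here — is that the exceptional $\mu$-null sets in the subgroup arguments depend on the character $y$, so uncountable unions of null sets must be avoided. This is automatic in the argument above because each closure property of $E$ or $B$ involves only two elements at once, and the support argument uses each $y\in E$ separately (through the open set $U_y$) rather than intersecting null sets across all of $E$.
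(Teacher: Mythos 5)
Your proof is correct and follows essentially the same route as the paper's source for this lemma: the paper gives no proof but cites \cite[Proposition 2.10]{Febooknew}, whose argument rests on the very same observation that $|\hat\mu(y)|=1$ forces $(x,y)=\hat\mu(y)$ for $\mu$-almost every $x$, with the support statement then obtained from the open null sets $\{x\in X: (x,y)\ne 1\}$, $y\in E$. One cosmetic remark: in the support step you only need the trivial direction that an open $\mu$-null set is disjoint from $\operatorname{supp}\mu$ (immediate from the definition of support), rather than the converse-flavored statement that the support meets every open set of positive measure, which in a non-second-countable group relies on the regularity of $\mu$.
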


\begin{lemma}[{\!\!\protect\cite{Rima}, see also 
		\cite[Lemma 9.10]{Febooknew}}]
\label{lem11} 
Let $X$ be a second countable locally compact Abelian group with character group
 $Y$. Let $H$ be a closed subgroup of $Y$, and let $\alpha$ be a topological 
 automorphism 
 of $X$. Put $G=A(X, H)$. Assume that $H^{(2)}=H$ and  $\alpha(G)=G$.
Let $\xi_1$ and  $\xi_2$  be independent random variables with values in
the group $X$  and distributions $\mu_1$ and $\mu_2$ such that
$$|\hat\mu_1(y)|=|\hat\mu_2(y)|=1, \quad y\in H.$$  
Suppose that the conditional  distribution of the linear form 
$L_2 = \xi_1 + \alpha\xi_2$ given $L_1 = \xi_1 + \xi_2$ is symmetric. 
Then there are some shifts $\lambda_j$ of the distributions $\mu_j$  
such that $\lambda_j$ are supported in $G$. In doing so, if  $\eta_j$ 
are independent random variables with values in   $X$  and distributions 
$\lambda_j$, then the conditional  distribution of the linear form
$M_2=\eta_1 + \alpha\eta_2$ given $M_1=\eta_1 + \eta_2$ is symmetric.
\end{lemma}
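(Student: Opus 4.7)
The plan is to translate the symmetry hypothesis into a functional equation on characteristic functions, use $|\hat\mu_j|\equiv 1$ on $H$ to peel off preliminary shifts, and then use the assumption $H^{(2)}=H$ to absorb the character factor that these shifts leave behind.

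First, the symmetry of the conditional distribution of $L_2$ given $L_1$ is equivalent to the identity of the characteristic functions of $(L_1,L_2)$ and $(L_1,-L_2)$, which reads
$$\hat\mu_1(u+v)\hat\mu_2(u+\widetilde\alpha v)=\hat\mu_1(u-v)\hat\mu_2(u-\widetilde\alpha v),\quad u,v\in Y.$$
Since $|\hat\mu_j|\equiv 1$ on $H$, the characteristic function of $\mu_j*\bar\mu_j$ equals $1$ on $H$, so by Lemma \ref{lem2} the support of $\mu_j*\bar\mu_j$ lies in $A(X,H)=G$. As the support of $\mu_j*\bar\mu_j$ contains all differences of points in the support of $\mu_j$, the latter must be concentrated in a single coset $x_j+G$, and I may write $\mu_j=E_{x_j}*\lambda_j$ with $\lambda_j$ supported in $G$.

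Substituting $\hat\mu_j(y)=(x_j,y)\hat\lambda_j(y)$ into the functional equation and simplifying via $(x_2,\widetilde\alpha v)=(\alpha x_2,v)$ yields
$$(x_1+\alpha x_2,2v)\,\hat\lambda_1(u+v)\hat\lambda_2(u+\widetilde\alpha v)=\hat\lambda_1(u-v)\hat\lambda_2(u-\widetilde\alpha v).$$
Since $\alpha(G)=G$ implies $\widetilde\alpha(H)=H$ by annihilator duality, and since $\lambda_j$ supported in $G=A(X,H)$ forces $\hat\lambda_j$ to be $H$-periodic and equal to $1$ on $H$, specializing to $u=0$ and $v=h\in H$ kills every $\hat\lambda_j$ factor and leaves $(x_1+\alpha x_2,2h)=1$ for every $h\in H$. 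This says $x_1+\alpha x_2\in A(X,H^{(2)})$, and here the hypothesis $H^{(2)}=H$ enters decisively: $A(X,H^{(2)})=A(X,H)=G$, so $z:=x_1+\alpha x_2$ lies in $G$.

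To finish, I would redistribute the shifts by replacing $x_1$ with $x_1-z$ and $\lambda_1$ with $E_z*\lambda_1$; since $z\in G$, the new $\lambda_1$ is still supported in $G$, while the character factor $(x_1+\alpha x_2,2v)$ becomes identically $1$. The displayed equation in the $\hat\lambda_j$ then reduces to exactly the characteristic-function form of the symmetry of the conditional distribution of $M_2=\eta_1+\alpha\eta_2$ given $M_1=\eta_1+\eta_2$, which is the conclusion. The main obstacle is precisely this absorption step: without $H^{(2)}=H$ the factor $(x_1+\alpha x_2,2v)$ could genuinely be nontrivial and no redistribution of shifts within $G$ would eliminate it; the entire role of the hypothesis is to force $z$ itself into $G$ so that the offending character becomes a legal shift.
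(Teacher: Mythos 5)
Your proof is correct and follows essentially the same route as the source the paper cites for this lemma (\!\!\cite{Rima}, \cite[Lemma 9.10]{Febooknew}): shift each $\mu_j$ so that its characteristic function equals $1$ on $H$, conclude via Lemma \ref{lem2} that the shifted distributions are supported in $G$, track the character factor $(x_1+\alpha x_2, 2v)$ that the shifts create in equation (\ref{11.04.1}), and use $\widetilde\alpha(H)=H$ together with $H^{(2)}=H$ to force $x_1+\alpha x_2\in A(X,H^{(2)})=G$ so the factor can be absorbed into a legal redistribution of the shifts. The only cosmetic difference is that you obtain the coset decomposition $\mathrm{supp}\,\mu_j\subset x_j+G$ from the support of the symmetrization $\mu_j*\bar\mu_j$, whereas the cited proof gets the same shifts from the standard fact that a characteristic function of modulus $1$ on $H$ restricts there to a character $(x_j,\cdot)$.
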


\begin{lemma}[{\!\!\protect\cite[Lemma 9.1]{Febooknew}}]
\label{lem1}
 Let $X$ be a second countable locally compact Abelian group with character group
 $Y$ and let $\alpha$ be a topological automorphism of $X$.
Let
$\xi_1$ and  $\xi_2$  be independent random variables with values in
 the group $X$  and distributions $\mu_1$ and $\mu_2$.   
 Then the conditional distribution of the linear form 
 $L_2 = \xi_1 + \alpha\xi_2$ given $L_1 = \xi_1 + \xi_2$ is symmetric if and only
 if the characteristic functions
 $\hat\mu_j(y)$ satisfy the equation\footnote{Equation (\ref{11.04.1}) 
 is a particular case 
for two independent random variables of so called  the Heyde functional equation.}
\begin{equation}\label{11.04.1}
\hat\mu_1(u+v )\hat\mu_2(u+\widetilde\alpha v )=
\hat\mu_1(u-v )\hat\mu_2(u-\widetilde\alpha v), \quad u, v \in Y.
\end{equation}
\end{lemma}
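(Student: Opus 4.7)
The plan is to convert the probabilistic hypothesis into the Heyde functional equation on the dual $Y=X^*$, exploit the product structure of $Y$ to locate a subgroup $G$ of $X$ containing shifts of $\mathrm{supp}(\mu_j)$, and then extract the finiteness and Haar-factor statements (iii)--(iv) by a prime-by-prime analysis.

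By Lemma \ref{lem1}, the hypothesis is equivalent to
$$\hat\mu_1(u+v)\hat\mu_2(u+\widetilde\alpha v) = \hat\mu_1(u-v)\hat\mu_2(u-\widetilde\alpha v), \quad u,v\in Y. \qquad(\star)$$
Pontryagin duality identifies $Y$ with $\prod_p\mathbb{Z}(p^{n_p})$, with $n_2=0$ because $X$ has no element of order $2$. Two structural features of $Y$ will be decisive and I would record them upfront: (a) multiplication by $2$ is an automorphism of each odd $\mathbb{Z}(p^{n_p})$, hence of $Y$ and of every closed subgroup, so the condition $H^{(2)}=H$ of Lemma \ref{lem11} comes for free; (b) since each $p$-component of $Y$ is cyclic, every closed subgroup of $Y$ decomposes as a product $\prod_p K_p$ along primes, reducing much of the analysis to the finite cyclic setting on $Y_p=\mathbb{Z}(p^{n_p})$.

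With $B_j=\{y\in Y:|\hat\mu_j(y)|=1\}$ (closed subgroups by Lemma \ref{lem2}), set $H=\bigcap_{n\in\mathbb{Z}}\widetilde\alpha^n(B_1\cap B_2)$; by construction $H$ is a closed $\widetilde\alpha$-invariant subgroup of $Y$ on which $|\hat\mu_j|=1$. Applying Lemma \ref{lem11} with $G:=A(X,H)$ yields shifts $\lambda_j=\mu_j*E_{-x_j}$ supported in $G$ for which the conditional symmetry persists, so that $\hat\lambda_j$ satisfy $(\star)$ again by Lemma \ref{lem1}. Further exploiting $(\star)$ for the shifted distributions --- for instance, comparing the identity $\hat\lambda_1(2v)\hat\lambda_2((I+\widetilde\alpha)v)=\hat\lambda_2((I-\widetilde\alpha)v)$ (from $u=v$) with its analogue after $v\mapsto\widetilde\alpha^{-1}v$, and using that $\hat\lambda_j$ factor through $Y/H$ --- yields a single distribution $\lambda$ supported in $G$ such that $\mu_j$ are shifts of $\lambda$. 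This establishes (i), and (v) is then the symmetry assertion of Lemma \ref{lem11}; (ii) is obtained by shrinking $G$ to the minimal subgroup containing $\mathrm{supp}(\lambda)$, which remains $\alpha$-invariant because it is characteristic for $\lambda$.

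The main obstacle is establishing (iii) and (iv). Writing $G=\bigoplus_p G_p$ with $G_p\cong\mathbb{Z}(p^{m_p})$ and $\alpha_p$ multiplication by a unit $c_p\in(\mathbb{Z}/p^{n_p}\mathbb{Z})^*$, one has $(I+\alpha)(G)=\bigoplus_p(1+c_p)G_p$, and (iii) amounts to $1+c_p\equiv 0\pmod{p^{m_p}}$ for all but finitely many $p$. I would deduce this by restricting $(\star)$ for $\hat\lambda$ to each $Y_p$, where it becomes the Heyde equation on the finite cyclic group $\mathbb{Z}(p^{n_p})$ with automorphism $c_p\cdot I$; the local analysis (mirroring \cite{F2004}) combined with the minimality of $G_p$ and the absence of $2$-torsion forces the congruence at all but finitely many primes. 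At the remaining exceptional primes, the same local analysis produces $\hat\lambda(y)=0$ for $y\in Y_p\setminus A(Y_p,(1+c_p)G_p)$, which recombines to $\hat\lambda(y)=0$ for $y\notin A(Y,(I+\alpha)(G))$ --- equivalently, $m_{(I+\alpha)(G)}$ is a factor of $\lambda$, which is (iv). This prime-by-prime reduction, and the subsequent recombination of local data into global statements, is where I expect the real work of the proof to lie.
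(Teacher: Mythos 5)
Your proposal does not prove the statement in question. The statement is Lemma \ref{lem1}, the elementary equivalence between the symmetry of the conditional distribution of $L_2$ given $L_1$ and the functional equation (\ref{11.04.1}). What you have written is instead an outline of a proof of the main Theorem \ref{tn1} (existence of the subgroup $G$, the distribution $\lambda$, statements (i)--(v), the prime-by-prime analysis of $(I+\alpha)(G)$, the Haar factor). Worse, as a proof of Lemma \ref{lem1} your attempt is circular: your very first step is ``By Lemma \ref{lem1}, the hypothesis is equivalent to $(\star)$,'' i.e., you invoke the statement to be proved as a known tool. None of the subsequent material --- the decomposition of $Y$ into $p$-components, the absence of $2$-torsion, Lemmas \ref{lem2} and \ref{lem11} --- is relevant to Lemma \ref{lem1}, which holds for an arbitrary second countable locally compact Abelian group and an arbitrary topological automorphism $\alpha$, with no hypotheses on torsion or on the structure of subgroups.

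The actual argument (the paper cites it from \cite[Lemma 9.1]{Febooknew}) is a short duality computation. As noted in the paper's footnote, the conditional distribution of $L_2$ given $L_1$ is symmetric if and only if the random vectors $(L_1,L_2)$ and $(L_1,-L_2)$, with values in $X\times X$, are identically distributed. By uniqueness of the Fourier transform this is equivalent to the equality of their characteristic functions at every $(u,v)\in Y\times Y$. Using independence of $\xi_1,\xi_2$ and the defining property $(\alpha x, y)=(x,\widetilde\alpha y)$ of the adjoint, one computes
\begin{equation*}
\mathbf{E}\left[(L_1,u)(L_2,v)\right]
=\mathbf{E}\left[(\xi_1,u+v)\right]\,\mathbf{E}\left[(\xi_2,u+\widetilde\alpha v)\right]
=\hat\mu_1(u+v)\,\hat\mu_2(u+\widetilde\alpha v),
\end{equation*}
and replacing $v$ by $-v$ in the second coordinate gives $\hat\mu_1(u-v)\hat\mu_2(u-\widetilde\alpha v)$ for the vector $(L_1,-L_2)$; equating the two expressions for all $u,v\in Y$ is exactly equation (\ref{11.04.1}), and every step is reversible, which gives the ``if and only if.'' (Second countability is the standing hypothesis under which conditional distributions and this reformulation are handled in the standard way.) You should supply this computation, and keep your structural analysis where it belongs, namely in the proof of Theorem \ref{tn1}.
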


The following lemma is a particular case for 
finite cyclic groups of odd order of Corollary 2.6 in \cite{F_Heyde_new}.
This lemma plays a key role in proving Theorem \ref{tn1}.

\begin{lemma}\label{nth1}  
Let $X$ be a finite cyclic group 
of odd order with character group $Y$. Let  $\alpha$ be an automorphism of the group $X$. 
 Let $\xi_1$ and $\xi_2$ be
independent random variables with values in   $X$ and distributions
$\mu_1$ and $\mu_2$ such that 
\begin{equation}\label{ne20.35}
\{y\in Y:|\hat\mu_1(y)|=|\hat\mu_2(y)|=1\}=\{0\}.
\end{equation}
Assume that the conditional  distribution of 
the linear form $L_2 = \xi_1 + \alpha\xi_2$ given $L_1 = \xi_1 + \xi_2$ 
is symmetric.
Then  $\mu_1=\mu_2=\mu$  and the 
Haar distribution $m_{(I+\alpha)(X)}$ 
is a factor of $\mu$.
\end{lemma}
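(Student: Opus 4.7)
The plan is to invoke Lemma~\ref{lem1} to translate the symmetry hypothesis into Heyde's functional equation
\[\hat\mu_1(u+v)\,\hat\mu_2(u+\widetilde\alpha v)=\hat\mu_1(u-v)\,\hat\mu_2(u-\widetilde\alpha v),\qquad u,v\in Y.\]
Identify $X\cong Y\cong\mathbb{Z}(N)$ with $N$ odd and $\widetilde\alpha$ with multiplication by some $a\in\mathbb{Z}(N)^{\ast}$, and set $H=\mathrm{Ker}(I+\widetilde\alpha)=A(Y,(I+\alpha)(X))$. By~(\ref{fe22.1}), the lemma's conclusion is equivalent to showing $\hat\mu_1=\hat\mu_2=:\hat\mu$ and $\hat\mu(y)=0$ for $y\notin H$: once this is established, $\widehat{m_{(I+\alpha)(X)}}\cdot\hat\mu=\mathbf{1}_H\cdot\hat\mu=\hat\mu$ exhibits $m_{(I+\alpha)(X)}$ as a factor of $\mu$.

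The easy step is to specialize Heyde's equation to $v\in H$, where $\widetilde\alpha v=-v$. Setting $u=v$ collapses the equation to $\hat\mu_1(2v)=\hat\mu_2(2v)$, and since $|X|$ is odd, multiplication by $2$ is an automorphism of $H$, so $\hat\mu_1$ and $\hat\mu_2$ agree on $H$.

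The main step --- and the principal obstacle --- is to show that $\hat\mu_j$ vanishes off $H$. I would extract two key identities: setting $u=v$ yields
\[\hat\mu_2((I-\widetilde\alpha)v)=\hat\mu_1(2v)\,\hat\mu_2((I+\widetilde\alpha)v),\]
and setting $u=\widetilde\alpha v$, together with $\hat\mu_1(-y)=\overline{\hat\mu_1(y)}$, yields
\[\overline{\hat\mu_1((I-\widetilde\alpha)v)}=\hat\mu_1((I+\widetilde\alpha)v)\,\hat\mu_2(2\widetilde\alpha v).\]
Taking logarithms of moduli on the joint support of the $\hat\mu_j$ turns these multiplicative relations into the linear pair $f_2((I-\widetilde\alpha)v)-f_2((I+\widetilde\alpha)v)=f_1(2v)$ and $f_1((I-\widetilde\alpha)v)-f_1((I+\widetilde\alpha)v)=f_2(2\widetilde\alpha v)$, where $f_j=\log|\hat\mu_j|\le 0$. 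Substituting one into the other expresses $f_1(v)$ as an explicit $\mathbb{Z}$-linear combination of values of $f_1$ at three automorphic images of $v$, of exactly the shape satisfied by a polynomial of degree $2$ on $Y$. Because each $f_j\le 0$ with equality only on the modulus-one subgroup $B_j=\{y:|\hat\mu_j(y)|=1\}$, the equation forces $f_1(v)=-\infty$ (i.e.\ $\hat\mu_1(v)=0$) whenever iteration of the associated map $\rho=(I-\widetilde\alpha)(I+\widetilde\alpha)^{-1}$ (on the stable subgroup $Y^{\ast}=\bigcap_k(I+\widetilde\alpha)^k(Y)$, where it is well defined and has finite order) fails to land in $H$. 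The hypothesis~(\ref{ne20.35}), namely $B_1\cap B_2=\{0\}$, is precisely what prevents a non-trivial modulus-one locus from absorbing the iteration and keeping $f_j$ finite at such $v$.

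Combining the two steps, $\hat\mu_1=\hat\mu_2=:\hat\mu$ supported in $H$, which yields both conclusions of the lemma. The hard part is rigorously extracting the support vanishing in the third step, particularly in prime-power cyclic cases (for instance, $N=p^k$ with $p\mid 1+a$), where $Y^{\ast}$ can be trivial and the naive iteration by $\rho$ gives no direct information; in these cases one must run the argument by downward induction along the descending chain $Y\supseteq(I+\widetilde\alpha)(Y)\supseteq(I+\widetilde\alpha)^2(Y)\supseteq\cdots\supseteq Y^{\ast}$, using both identities simultaneously at each stage together with the non-positivity of $f_j$ and the non-degeneracy hypothesis~(\ref{ne20.35}) to force the vanishing outside $H$.
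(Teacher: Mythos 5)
Your reductions are fine as far as they go: the equivalence of the conclusion with ``$\hat\mu_1=\hat\mu_2$ and $\hat\mu_j(y)=0$ for $y\notin\mathrm{Ker}(I+\widetilde\alpha)$'' is correct by (\ref{fe22.1}), the equality $\hat\mu_1=\hat\mu_2$ on $\mathrm{Ker}(I+\widetilde\alpha)$ is obtained exactly as it should be, and both of your displayed identities (the substitutions $u=v$ and $u=\widetilde\alpha v$ in (\ref{11.04.1})) check out. But the proof has a genuine hole at precisely the step you flag yourself: the vanishing of $\hat\mu_j$ off $\mathrm{Ker}(I+\widetilde\alpha)$ is never established. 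Two concrete defects. First, your iteration map $\rho=(I-\widetilde\alpha)(I+\widetilde\alpha)^{-1}$ inverts the wrong operator: $I+\widetilde\alpha$ is exactly the operator whose kernel carries the support, and in the essential cases (e.g.\ $N=p^k$ with $p\mid 1+a$, so that $\mathrm{Ker}(I+\widetilde\alpha)\ne\{0\}$) it is not invertible; you concede that a ``downward induction along the descending chain'' is then needed, but you never carry it out, and nothing in the sketch shows it can be. Second, the passage to $f_j=\log|\hat\mu_j|$ is not sound at zeros of $\hat\mu_j$: the multiplicative identities propagate zeros only in one direction (for instance $\hat\mu_2((I+\widetilde\alpha)v)=0$ forces $\hat\mu_2((I-\widetilde\alpha)v)=0$, not conversely), so the claim that the relations ``force $f_1(v)=-\infty$'' off the kernel is asserted rather than proved. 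You are also missing the preliminary observation that (\ref{ne20.35}) forces $\mathrm{Ker}(I-\widetilde\alpha)=\{0\}$: restricting (\ref{11.04.1}) to this kernel and putting $u=v$ gives $\hat\mu_1(2v)\hat\mu_2(2v)=1$, hence $|\hat\mu_j|=1$ there since $f_2$ is an automorphism, and (\ref{ne20.35}) kills the kernel. So it is $I-\widetilde\alpha$, not $I+\widetilde\alpha$, that becomes invertible.

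With $I-\widetilde\alpha\in\mathrm{Aut}(Y)$ in hand, the step you could not complete follows from Lemma \ref{newle1} (quoted in the paper from \cite{F_Heyde_new}) applied to $f=|\hat\mu_1|^2$, $g=|\hat\mu_2|^2$: since $Y$ is finite, the automorphism $\kappa=-f_4\widetilde\alpha(I-\widetilde\alpha)^{-2}$ has finite order, so $\kappa^m y_0=y_0$ for every $y_0\in Y$; then (\ref{11.04.16}) combined with (\ref{11.04.14}) shows that $f(y_0)\ne 0$ implies $f(z_1)=1$ for $z_1=-(I+\widetilde\alpha)(I-\widetilde\alpha)^{-1}y_0$, and (\ref{11.04.8}) with (\ref{11.04.15}) gives $g(z_2)=1$ for a $z_2$ that is an automorphic image of $z_1$, hence of the same order, so $\langle z_1\rangle=\langle z_2\rangle$ because $Y$ is cyclic; by Lemma \ref{lem2} both moduli equal $1$ on this common subgroup, and (\ref{ne20.35}) forces $z_1=0$, i.e.\ $y_0\in\mathrm{Ker}(I+\widetilde\alpha)$. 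This fixed-point argument is exactly the one the author runs in the proof of part 2 of Theorem \ref{nnth1}. Note finally that the paper itself does not prove Lemma \ref{nth1} but cites it as a particular case of Corollary 2.6 of \cite{F_Heyde_new}, so a self-contained proof along these lines would be a legitimate contribution --- but your version, as written, fails at its central step.
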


The following statement is a particular case of Lemma 9.17 in \cite{Febooknew}.

\begin{lemma}
\label{newle18.01.2}  
 Let $Y$
be an Abelian group  and let $\beta$ be an automorphism of $Y$.
Assume that the function  $\varphi(y)$ satisfies the equation 
\begin{equation}\label{e20.30}
\varphi(u+v)+\varphi(u+\beta v)=\varphi(u-v)+
\varphi(u-\beta v)=0,\quad u, v \in
 Y.
\end{equation}
 Then  the function  $\varphi(y)$ satisfies the equation 
 \begin{equation*}\label{e20.31}
\Delta_{(I-\beta)k_3}\Delta_{2 k_2}\Delta_{(I+\beta)k_1}\varphi(y)
= 0,\quad  y\in Y,
\end{equation*}
 where $k_j$, $j=1, 2, 3$, are arbitrary elements of the group $Y$. 
\end{lemma}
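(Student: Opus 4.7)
The plan is to observe that the functional equation is strong enough, by itself, to make one of the three difference operators in the conclusion redundant, so the other two play a purely cosmetic role. First I would extract two elementary consequences from the hypothesis $\varphi(u+v)+\varphi(u+\beta v)=0$: setting $v=0$ gives $2\varphi(u)=0$ for all $u\in Y$, while substituting $u\mapsto u-v$ (and then, for convenience, $v\mapsto -v$) yields
\[
\varphi\bigl(u+(I-\beta)v\bigr)=-\varphi(u),\qquad u,v\in Y.
\]

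Combining these two identities, for every $y,k_3\in Y$ one obtains
\[
\Delta_{(I-\beta)k_3}\varphi(y)=\varphi\bigl(y+(I-\beta)k_3\bigr)-\varphi(y)=-\varphi(y)-\varphi(y)=-2\varphi(y)=0.
\]
That is, the single operator $\Delta_{(I-\beta)k_3}$ already annihilates $\varphi$. Since finite difference operators commute in their translation parameters, I would then push this operator to the inside of the triple composition and conclude
\[
\Delta_{(I-\beta)k_3}\Delta_{2k_2}\Delta_{(I+\beta)k_1}\varphi(y)=\Delta_{2k_2}\Delta_{(I+\beta)k_1}\bigl(\Delta_{(I-\beta)k_3}\varphi\bigr)(y)=0.
\]

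There is essentially no substantive obstacle: the argument is a chain of two trivial substitutions followed by commutation of $\Delta$'s. The only mild subtlety worth flagging is that $2\varphi=0$ does not by itself force $\varphi\equiv 0$ (the values of $\varphi$ lie in the $2$-torsion subgroup of the codomain), so one really does need the second identity to turn $\varphi(y+(I-\beta)k_3)-\varphi(y)$ into $-2\varphi(y)$ before applying $2\varphi=0$ to make it vanish. The other two operators $\Delta_{2k_2}$ and $\Delta_{(I+\beta)k_1}$ are kept in the statement so that the lemma appears in exactly the form in which it will be invoked in the proof of Theorem~\ref{tn1}.
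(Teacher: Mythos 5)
Your proof establishes only the statement as misprinted, and the misprint is doing all the work. The trailing ``$=0$'' in (\ref{e20.30}) is a typo: the intended hypothesis --- the lemma is quoted as a particular case of Lemma 9.17 in \cite{Febooknew} --- is
\[
\varphi(u+v)+\varphi(u+\beta v)-\varphi(u-v)-\varphi(u-\beta v)=0,\quad u,v\in Y,
\]
i.e.\ the two sides of (\ref{e20.30}) are equal to each other, not each equal to zero. The way the lemma is used inside the proof of Theorem \ref{tn1} settles this beyond doubt: there it is applied to $\varphi(y)=\ln\hat\nu(y)$ on $Y_l$ with $\beta=\widetilde\alpha$, and taking logarithms in (\ref{n11.04.1}) yields exactly the equality of the two sides, with no reason for either side to vanish. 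On your literal reading, $v=0$ gives $2\varphi\equiv0$, and since $\varphi$ is real-valued here (your hedge about $2$-torsion in the codomain does not apply), this forces $\varphi\equiv0$, i.e.\ $|\hat\mu(y)|=1$ on all of $Y_l$ --- the very conclusion the theorem's proof is laboring to reach, and one that contradicts (\ref{27ne20.35}). So under your reading the lemma is vacuous and could not serve its purpose; the triviality you noticed (``one operator suffices, the other two are cosmetic'') was a signal to suspect the transcription of the statement rather than to exploit it.

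Under the corrected hypothesis every step of your argument fails: $v=0$ gives the tautology $2\varphi(u)=2\varphi(u)$, the identity $\varphi(u+(I-\beta)v)=-\varphi(u)$ is unavailable, and $\Delta_{(I-\beta)k_3}$ alone does not annihilate $\varphi$. Concretely, take $Y=\mathbb{Z}$, $\beta=-I$, $\varphi(n)=n$: the equation holds identically, yet $\Delta_{(I-\beta)k}\varphi=\Delta_{2k}\varphi=2k\ne0$; the conclusion survives in this example only because the innermost operator is $\Delta_{(I+\beta)k_1}=\Delta_{0}=0$, which shows all three operators are essential in general. The actual proof is a three-fold differencing in which each substitution freezes one of the four arguments $u\pm v$, $u\pm\beta v$. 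Replacing $(u,v)$ by $(u+k_1,v-k_1)$ fixes $u+v$, and subtracting the original equation gives
\[
\Delta_{(I-\beta)k_1}\varphi(u+\beta v)=\Delta_{2k_1}\varphi(u-v)+\Delta_{(I+\beta)k_1}\varphi(u-\beta v);
\]
replacing $(u,v)$ by $(u+\beta m,v-m)$ fixes $u+\beta v$ and kills the left-hand side, leaving
\[
0=\Delta_{(I+\beta)m}\Delta_{2k_1}\varphi(u-v)+\Delta_{2\beta m}\Delta_{(I+\beta)k_1}\varphi(u-\beta v);
\]
finally, replacing $(u,v)$ by $(u+n,v+n)$ fixes $u-v$ and leaves $0=\Delta_{(I-\beta)n}\Delta_{2\beta m}\Delta_{(I+\beta)k_1}\varphi(u-\beta v)$. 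Setting $v=0$ and using $\beta\in\mathrm{Aut}(Y)$, so that $2\beta m$ ranges over all elements of the form $2k_2$, gives the assertion with $k_3=n$, $k_2=\beta m$.
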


The following lemma is well known (for the proof see, 
e.g., \cite[Proposition 1.30]{Febooknew}).
\begin{lemma}
\label{s5.6}    
	Let $Y$
	be a  compact Abelian group   and let $f(y)$ be a continuous
	polynomial on $Y$. Then $f(y)=\text{const}$
		for all $y\in Y$.
\end{lemma}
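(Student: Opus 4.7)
The plan is to use Fourier analysis on the compact Abelian group $Y$, exploiting the fact that the characters of $Y$ diagonalise every finite-difference operator $\Delta_h$. Since $f$ is continuous on the compact group $Y$, it is bounded and lies in $L^2(Y,m_Y)$, where $m_Y$ denotes normalised Haar measure. The characters $\chi\in Y^*$ form an orthonormal basis of $L^2(Y,m_Y)$, so $f$ admits a Fourier expansion $f=\sum_{\chi\in Y^*}c_\chi\chi$, convergent in $L^2$, with $c_\chi=\int_Y f(y)\overline{\chi(y)}\,dm_Y(y)$.

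The key observation is that every character is an eigenfunction of $\Delta_h$: for $\chi\in Y^*$ and $y,h\in Y$,
$$\Delta_h\chi(y)=\chi(y+h)-\chi(y)=(\chi(h)-1)\chi(y),$$
hence $\Delta_h^{n+1}\chi=(\chi(h)-1)^{n+1}\chi$. Because $\Delta_h^{n+1}$ is a bounded operator on $L^2(Y)$ (being a polynomial combination of the unitary translations), it can be applied term-by-term to the Fourier expansion to yield
$$\Delta_h^{n+1}f=\sum_{\chi\in Y^*}c_\chi(\chi(h)-1)^{n+1}\chi\quad\text{in }L^2(Y).$$

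The hypothesis that $f$ is a polynomial gives $\Delta_h^{n+1}f\equiv 0$ for every $h\in Y$, so uniqueness of Fourier coefficients yields $c_\chi(\chi(h)-1)^{n+1}=0$ for all $\chi\in Y^*$ and all $h\in Y$. For a non-trivial character $\chi$, there exists $h\in Y$ with $\chi(h)\neq 1$, so $c_\chi=0$ whenever $\chi$ is not the trivial character $\mathbf{1}$. Consequently $f=c_{\mathbf 1}$ in $L^2(Y)$, i.e.\ $m_Y$-almost everywhere; and since $f$ is continuous while $m_Y$ has full support on $Y$, this upgrades to the pointwise equality $f(y)=c_{\mathbf 1}$ for every $y\in Y$. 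The only delicate point of the plan is the transition from the $L^2$-identity to the pointwise conclusion, and it is handled by continuity of $f$ together with the full support of Haar measure, so no further convergence analysis of the Fourier series is needed.
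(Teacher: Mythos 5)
Your proof is correct: every step checks out, including the two points that actually require care --- applying $\Delta_h^{n+1}=(T_h-I)^{n+1}$ term-by-term is legitimate because it is a bounded (indeed, a polynomial in unitary translations) operator on $L^2(Y,m_Y)$, and the passage from the $m_Y$-a.e.\ identity $f=c_{\mathbf 1}$ to the pointwise one is valid since Haar measure on a compact group has full support. However, your route differs from the one behind the paper's citation (the paper gives no in-text proof, referring to Proposition 1.30 of Feldman's monograph). The classical argument there is elementary and measure-free: fix $y,h\in Y$ and consider $F(k)=f(y+kh)$, $k\in\mathbb{Z}$; the hypothesis $\Delta_h^{n+1}f\equiv 0$ makes $F$ an ordinary polynomial of degree at most $n$ on $\mathbb{Z}$, which is bounded because $f$ is continuous on the compact group $Y$, hence constant, giving $f(y+h)=F(1)=F(0)=f(y)$ for all $y,h$. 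That argument in fact proves the stronger statement that any \emph{bounded} polynomial on an arbitrary Abelian group is constant, using no harmonic analysis at all; your Fourier-analytic proof is heavier machinery (Peter--Weyl/orthonormality of characters, uniqueness of Fourier coefficients, full support of Haar measure) and uses compactness essentially, but in exchange it is a clean structural one-liner once that machinery is available, and it isolates exactly why nontrivial frequencies cannot occur: each character is a $\Delta_h$-eigenfunction with eigenvalue $\chi(h)-1$, which is nonzero for some $h$ unless $\chi$ is trivial.
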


\noindent\textbf{\emph{Proof of Theorem  \ref{tn1}}}  
Any torsion Abelian group is isomorphic to a weak direct product of its 
$p$-components (\!\!\cite[(A.3)]{Hewitt-Ross}).
By the condition of the theorem, each $p$-component of the group $X$ 
is cyclic. Taking into 
account that any cyclic $p$-group is isomorphic to $\mathbb{Z}(p^{k})$
for some natural $k$,
we can suppose, without loss of generality, that
\begin{equation}\label{12.08.3}
X=\mathop{\mathbf{P}^*}\limits_{p_j \in {\mathcal P}}\mathbb{Z}(p_j^{k_j}),
\end{equation}
where ${\mathcal P}$ is a set of pairwise distinct prime numbers such that 
$2\notin {\mathcal P}$ and all $k_j\ge 1$.

Denote by $Y$ the character group of the group $X$. The group $Y$ is compact and 
topologically 
isomorphic to the direct product of the groups 
$\mathbb{Z}(p_j^{k_j})$, where ${p_j \in {\mathcal P}}$. 
To avoid introducing 
additional notation, we assume that
\begin{equation}\label{12.08.1} 
Y=\mathop{\mathbf{P}}\limits_{p_j \in {\mathcal P}}\mathbb{Z}(p_j^{k_j}).
\end{equation}
It is easy to see that any subgroup $K$ of the group $X$ is of the form
\begin{equation}\label{nen1}
K=\mathop{ \mathbf{P}^*}\limits_{p_j \in {\mathcal S}}\mathbb{Z}(p_j^{l_j}),
\end{equation}
where $\mathcal S\subset \mathcal P$  and $l_j\le k_j$, i.e., 
$K$ is also a discrete torsion Abelian group 
with cyclic $p$-components containing no elements of order $2$.
Denote by $x=(x_1, x_2,\dots, x_n, 0, 0,\dots)$, 
where $x_j\in\mathbb{Z}(p_j^{k_j})$, elements of the group $X$.
Let $\alpha\in\mathrm{Aut}(X)$. Since $p_i\ne p_j$  for 
all $i\ne j$, the automorphism $\alpha$ acts on elements of the group 
$X$ as follows:
\begin{equation}\label{en21}
\alpha(x_1, x_2,\dots, x_n, 0, 0,\dots)=
(\alpha_{\mathbb{Z}(p_1^{k_1})}x_1, \alpha_{\mathbb{Z}(p_2^{k_2})}x_2,\dots, 
\alpha_{\mathbb{Z}(p_n^{k_n})}x_n, 0, 0,\dots).
\end{equation} 
Note  that each automorphism 
of the group $\mathbb{Z}(p_j^{k_j})$ is the multiplication by 
a  natural number $m$, i.e., coincides with an endomorphism $f_{m}$, 
where $m$ and $p_j$ are mutually prime. 
In view of (\ref{nen1}) and (\ref{en21}), any subgroup of the group $X$ is characteristic. 
From the above it follows that we can assume, without loss of 
generality, that the minimal subgroup of the group $X$ containing the supports 
of $\mu_1$  and  $\mu_2$ coincides with $X$. 

Consider the set
$$H=\{y\in Y:|\hat\mu_1(y)|=|\hat\mu_2(y)|=1\}.$$
By Lemma \ref{lem2}, $H$ is a closed subgroup of the group $Y$.
Since $f_2$ is a topological automorphism of any closed subgroup of
the group $Y$, we have $H^{(2)}=H$.
Taking into account that any subgroup of the group $X$ is characteristic,
we can apply Lemma \ref{lem11} and reduce the proof of the theorem
to the case when condition (\ref{ne20.35}) 
is fulfilled, i.e., $H=\{0\}$.
We will prove that in this case there is a distribution $\mu$ such that
the following statement are true:
\renewcommand{\labelenumi}{\rm(\Roman{enumi})}
\begin{enumerate}
\item
$\mu_1=\mu_2=\mu$;
	\item
$X$ is the minimal subgroup containing the support of $\mu$;
\item
$(I+\alpha)(X)$ is a finite
group;
			\item
			the Haar distribution $m_{(I+\alpha)(X)}$ is a factor of  $\mu$.
\end{enumerate}
Then the theorem will be proved. Thus, assuming that condition (\ref{ne20.35})
holds, we will prove statements (I)--(IV).

Let us prove statement (I). By Lemma \ref{lem1}, 
the characteristic functions $\hat\mu_j(y)$   satisfy equation (\ref{11.04.1}).
The restriction of equation (\ref{11.04.1}) 
to the subgroup   $\mathrm{Ker}(I+\widetilde\alpha)$  is of the form 
\begin{equation}\label{12.08.5}
\hat\mu_1(u+v )\hat\mu_2(u-v )=
\hat\mu_1(u-v )\hat\mu_2(u+v), \quad u, v \in \mathrm{Ker}(I+\widetilde\alpha).
\end{equation}
Substituting $u=v=y$ in equation (\ref{12.08.5}), we obtain 
$\hat\mu_1(2y)=\hat\mu_2(2y)$ for all $y \in \mathrm{Ker}(I+\widetilde\alpha)$.
Since $f_2$ is a topological automorphism of any closed subgroup of
the group $Y$, this implies that
$\hat\mu_1(y)=\hat\mu_2(y)$ for all $y \in \mathrm{Ker}(I+\widetilde\alpha)$. Set
\begin{equation}\label{en5}
\psi(y)=\hat\mu_1(y)=\hat\mu_2(y), \quad y \in \mathrm{Ker}(I+\widetilde\alpha).
\end{equation}

Let us verify that 
\begin{equation}\label{e1}
\hat\mu_j(y)=0 \ \mbox{for all} \ y \notin \mathrm{Ker}(I+\widetilde\alpha), \ j=1, 2.
\end{equation}
  Take $y_0\notin \mathrm{Ker}(I+\widetilde\alpha)$. 
Assume first that $y_0$ is an element of finite order.
Consider the subgroup
$T=\langle  y_0 \rangle$  generated by the 
element $y_0$. By the Pontryagin duality theorem, $T$ is the 
character group of a finite cyclic group $S$, where
 $S$ is isomorphic to $T$. 
Denote by  $\omega_j$ the  distributions on the group $S$ 
with the characteristic functions
\begin{equation}\label{22en6}
\hat\omega_j(y)=\hat\mu_j(y), \quad y\in T, \ j=1, 2.
\end{equation}
Since the characteristic functions $\hat\mu_j(y)$ 
 satisfy equation (\ref{11.04.1}) and the subgroup $T$  is characteristic, 
 the characteristic functions 
 $\hat\omega_j(y)$ satisfy the equation
\begin{equation}\label{14.08.3}
\hat\omega_1(u+v )\hat\omega_2(u+\widetilde\kappa v )=
\hat\omega_1(u-v )\hat\omega_2(u-\widetilde\kappa v), \quad u, v \in T,
\end{equation} 
where $\kappa$ is an automorphism of the group $S$. 
It follows from (\ref{ne20.35})  and (\ref{22en6}) that
\begin{equation}\label{07.08.5}
\{y\in T:|\hat\omega_1(y)|=|\hat\omega_2(y)|=1\}=\{0\}.
\end{equation}

Let $\zeta_1$ and $\zeta_2$ be independent random variables with values in the group
$S$ and distributions $\omega_1$ and $\omega_2$. 
By  Lemma \ref{lem1}, it follows from (\ref{14.08.3}) that 
the conditional  distribution of 
the linear form $N_2 = \zeta_1 + \kappa\zeta_2$ given 
$N_1 = \zeta_1 + \zeta_2$ 
is symmetric. Since (\ref{07.08.5}) holds, 
we can apply Lemma 
\ref{nth1} to the group $S$, the automorphism $\kappa$,   
the random variables $\zeta_j$, and the distributions $\omega_j$.
By Lemma 
\ref{nth1}, we get that there is a distribution
$\omega$ on the group $S$ such that $\omega_1=\omega_2=\omega$ and 
the Haar distribution $m_{(I+\kappa)(S)}$ 
is a factor of of $\omega$. 
In view of (\ref{fe22.1}) and the fact that
\begin{equation*}\label{06.08.3}
A(T, (I+\kappa)(S))=\mathrm{Ker}(I+\widetilde\kappa),
\end{equation*}
from the above it follows that
$\hat\omega(y)=0$ for all $y\notin \mathrm{Ker}(I+\widetilde\kappa)$. 
We   have $\widetilde\alpha y=\widetilde\kappa y$ for all $y\in T$.
This implies that if $y_0\notin \mathrm{Ker}(I+\widetilde\alpha)$, then
$y_0\notin \mathrm{Ker}(I+\widetilde\kappa)$. Hence
 $\hat\omega(y_0)=0$ 
and $\omega_j(y_0)=\omega(y_0)=0$. 
Considering (\ref{22en6}), we get that
$\hat\mu_j(y_0)=0$, $j=1, 2$. Thus, we proved (\ref{e1}) if
$y$ is an element of finite order.

Denote by 
$y=(y_1, y_2,\dots, y_n,\dots)$, 
where $y_j\in\mathbb{Z}(p_j^{k_j})$, elements of the group $Y$. 
Assume now that $y_0=(y_1, y_2, \dots, y_n, \dots)$ is an element 
of infinite order. Put $y_0^{(n)}=(y_1, y_2, \dots, y_n, 0, 0, \dots)$.
The elements $y_0^{(n)}$ are of finite order and
$y_0^{(n)}\rightarrow y_0$ as $n\rightarrow\infty$.
It follows from $y_0 \notin \mathrm{Ker}(I+\widetilde\alpha)$ that
$y_0^{(n)}\notin \mathrm{Ker}(I+\widetilde\alpha)$ for all large enough
$n$. As proven above
$\hat\mu_1(y_0^{(n)})=\hat\mu_2(y_0^{(n)})=0$.
Hence 
$\hat\mu_1(y_0)=\hat\mu_2(y_0)=0$. Thus, (\ref{e1}) is proved.
In view of (\ref{en5}),  as a result we obtain
$\hat\mu_1(y)=\hat\mu_2(y)$ for all $y\in Y$.
Hence $\mu_1=\mu_2$. Put
\begin{equation}\label{nf1}
\mu=\mu_1=\mu_2.
\end{equation}
Thus, statement  (I) is proved.

In view of (\ref{nf1}), statement (II) follows from condition (\ref{ne20.35}).

Let us prove statement (III). By Lemma \ref{lem1}, 
the characteristic functions $\hat\mu_j(y)$   satisfy equation (\ref{11.04.1}). 
In view of (\ref{nf1}), write equation (\ref{11.04.1}) in the form
\begin{equation}\label{n11.04.1}
\hat\mu(u+v )\hat\mu(u+\widetilde\alpha v )=
\hat\mu(u-v )\hat\mu(u-\widetilde\alpha v), \quad u, v \in Y.
\end{equation}
Moreover,  condition (\ref{ne20.35}) takes the form
\begin{equation}\label{27ne20.35}
\{y\in Y:|\hat\mu(y)|=1\}=\{0\}.
\end{equation}
Put $P=\mathrm{Ker}(I-\widetilde\alpha)$.    
It follows from $\widetilde\alpha y=y$ for all $y\in P$ that  
the restriction of equation (\ref{n11.04.1}) to the subgroup $P$ is of the form
\begin{equation}\label{11.04.2}
\hat\mu^2(u+v)=\hat\mu^2(u-v), \quad u, v\in P.
\end{equation}
Substituting  $u=v=y$ in (\ref{11.04.2}), we get 
\begin{equation}\label{n11.04.2}
\hat\mu^2(2y)=1, \quad y\in P.
\end{equation}
Since $f_2$ is a topological automorphism of any closed subgroup of
the group $Y$, (\ref{n11.04.2})  implies that 
\begin{equation}\label{ynn11.04.2}
|\hat\mu(y)|=1, \quad y\in P.
\end{equation}
Taking into account (\ref{27ne20.35}) and (\ref{ynn11.04.2}), 
we conclude that $P=\mathrm{Ker}(I-\widetilde\alpha)=\{0\}$. 
Inasmuch as the group $Y$ is of the form (\ref{12.08.1}), 
this implies that 
\begin{equation}\label{14.08.1}
I-\widetilde\alpha\in\mathrm{Aut}(Y).
\end{equation}
Note that then $I-\alpha\in\mathrm{Aut}(X)$.

Put $\nu=\mu*\bar\mu$. Then 
$\hat\nu(y)=|\hat\mu(y)|^2\ge 0$  for all $y\in Y$.
Since the characteristic function  $\hat\mu(y)$   satisfies equation (\ref{n11.04.1}), 
 the characteristic function    $\hat\nu(y)$ also satisfies equation   
(\ref{n11.04.1}). 

Let ${\mathcal P}=\{p_1, p_2, \dots, p_n, \dots\}$, where
$p_i<p_j$ for $i<j$.
Taking into account that   the family of the subgroups
\begin{equation*} 
Y_k=\mathop{\mathbf{P}}\limits_{p_j \in {\mathcal P}, \ p_j\ge k}\mathbb{Z}(p_j^{k_j}),
\quad k=1,2, \dots,
\end{equation*}
forms an open basis at the zero of the group $Y$, we can choose a natural 
 $l$ in such a way that 
$\hat\nu(y)>0$ for all
$y\in Y_l$. Put $\varphi(y)=\ln\hat\nu(y)$, 
$y\in Y_l$.
Inasmuch as the characteristic function    $\hat\nu(y)$ satisfies equation   
(\ref{n11.04.1}), the function   $\varphi(y)$ satisfies equation 
(\ref{e20.30}), where   $Y=Y_l$ and
$\beta=\widetilde\alpha$. In view of 
$f_2\in\mathrm{Aut}(Y)$ and (\ref{14.08.1}), 
it follows from Lemma \ref{newle18.01.2} that 
 the function   $\varphi(y)$ satisfies the equation
$$
\Delta_{h}^3\varphi(y) = 0,
 \quad y, h\in (I+\widetilde\alpha)(Y_l),  
$$
i.e., the function  $\varphi(y)$ is a continuous 
polynomial  on the group $(I+\widetilde\alpha)(Y_l)$.
Since $\varphi(0)=0$ and the subgroup $(I+\widetilde\alpha)(Y_l)$ is compact, 
we obtain from Lemma \ref{s5.6} that
$\varphi(y)=0$ for all  
$y\in (I+\widetilde\alpha)(Y_l)$. Hence
$|\hat\mu(y)|=1$ for all $y\in (I+\widetilde\alpha)(Y_l)$. 
In view of (\ref{27ne20.35}), we have 
\begin{equation}\label{12.08.2}
(I+\widetilde\alpha)(Y_l)=\{0\}, \quad j=1, 2.
\end{equation}
Put
\begin{equation}\label{12.08.4}
X_k=\mathop{\mathbf{P}^*}\limits_{p_j \in {\mathcal P}, \ p_j\ge k}\mathbb{Z}(p_j^{k_j}),
\quad k=1,2, \dots 
\end{equation}
Inasmuch as $Y_k=X_k^*$, it follows from (\ref{12.08.2}) that 
\begin{equation}\label{12.08.7}
\alpha_{X_l}=-I.
\end{equation} 
Taking into account (\ref{12.08.3}), (\ref{12.08.4}), and (\ref{12.08.7}), we get that
 $$(I+\alpha)(X)=\mathop{\mathbf{P}}\limits_{p_j \in {\mathcal P}, 
 \ p_j<l}\mathbb{Z}(p_j^{k_j})$$ is a finite group. Thus, statement 
 (III) is proved.
 
Let us prove that statement  (IV)  is also valid. 
 In follows from (\ref{en5}), (\ref{e1}), 
and (\ref{nf1}), that the 
characteristic function of the distribution $\mu$ can be written the form
\begin{equation}
 \label{en10}\hat\mu(y)=
 \begin{cases}
 \psi(y) & \text{\ if\ }\ \ y\in\mathrm{Ker}(I+\widetilde\alpha),
 \\ 0 & \text{\ if\ }\ \ y\notin\mathrm{Ker}(I+\widetilde\alpha).
 \end{cases}
 \end{equation}
Consider the Haar distribution $m_{(I+\alpha)(X)}$.
Taking into account the fact that 
$A(Y, (I+\alpha)(X))=\mathrm{Ker}(I+\widetilde\alpha)$ and (\ref{fe22.1}), 
the characteristic 
function $\widehat m_{(I+\alpha)(X)}(y)$
is of the form
\begin{equation}\label{d2}
\widehat m_{(I+\alpha)(X)}(y)=
\begin{cases}
1 & \text{\ if\ }\ \  y\in\mathrm{Ker}(I+\widetilde\alpha),
\\ 0 & \text{\ if\ }\ \ y\notin\mathrm{Ker}(I+\widetilde\alpha).
\end{cases}
\end{equation}
It follows from
(\ref{en10}) and (\ref{d2}) that $\hat\mu(y)=
\widehat m_{(I+\alpha)(X)}(y)\hat\mu(y)$ for all $y\in Y$.
Hence $\mu=m_{(I+\alpha)(X)}*\mu$ and statement (IV) is also proved.
The theorem is completely proved. $\hfill\Box$

\medskip

The following statement results from the proof of Theorem \ref{tn1}.

\begin{corollary}\label{nnco1} Let $X$ be a discrete torsion Abelian group 
with cyclic $p$-components containing no elements of order $2$.
Let $Y$ be a character group of the group $X$. 
Assume that all conditions of 
Theorem $\ref{tn1}$ are fulfilled and the characteristic functions 
$\hat\mu_j(y)$ satisfy condition $(\ref{ne20.35})$.
Then $\mu_1=\mu_2=\mu$, $X$ is the minimal subgroup containing 
the support of 
$\mu$, $(I+\alpha)(X)$ is a finite
group, the Haar distribution $m_{(I+\alpha)(X)}$ 
is a factor  of $\mu$, and $I-\alpha\in\mathrm{Aut}(X)$.
\end{corollary}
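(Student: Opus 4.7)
The plan is to read the corollary directly off the argument already carried out for Theorem \ref{tn1}. That proof first uses Lemma \ref{lem11} to reduce to the case in which condition (\ref{ne20.35}) holds, and then under this assumption establishes the four intermediate properties (I)--(IV). Since the corollary's hypotheses already include (\ref{ne20.35}), the Lemma \ref{lem11} step is vacuous, and properties (I)--(IV) are literally the first four conclusions of the corollary with $G=X$ and $\lambda=\mu$. So the bulk of the work has already been done; only the last assertion $I-\alpha\in\mathrm{Aut}(X)$ needs to be extracted.

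To establish this, I would repeat the calculation used in the proof of statement (III). By Lemma \ref{lem1} the characteristic function $\hat\mu$ satisfies equation (\ref{n11.04.1}); restricting this equation to $P=\mathrm{Ker}(I-\widetilde\alpha)$ yields $\hat\mu^2(u+v)=\hat\mu^2(u-v)$ for $u,v\in P$. Setting $u=v=y$ and using that $f_2$ is a topological automorphism of every closed subgroup of $Y$ (which holds because $X$, and hence $Y$, contains no elements of order $2$) forces $|\hat\mu(y)|=1$ on $P$. Combined with (\ref{ne20.35}), this gives $P=\{0\}$, so $I-\widetilde\alpha$ has trivial kernel.

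To upgrade injectivity to a topological automorphism I would exploit the structural description (\ref{12.08.1}) of $Y$ as a direct product of the finite cyclic groups $\mathbb{Z}(p_j^{k_j})$ with pairwise distinct odd primes $p_j$. Because the orders of the factors are pairwise coprime, every continuous endomorphism of $Y$ splits along this decomposition, and an injective endomorphism of a finite cyclic group is automatically a bijection. Therefore $I-\widetilde\alpha\in\mathrm{Aut}(Y)$, and dualizing gives $I-\alpha\in\mathrm{Aut}(X)$.

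The main obstacle, such as it is, is purely organizational: one must verify that every step invoked from the proof of Theorem \ref{tn1} makes sense under the strictly weaker hypotheses of the corollary (no appeal to Lemma \ref{lem11}, no passage to shifts). Since the argument for (I)--(IV) in the theorem is carried out precisely under condition (\ref{ne20.35}), no new analytic content is required and the proof is essentially a bookkeeping exercise.
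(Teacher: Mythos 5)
Your proposal is correct and matches the paper exactly: the paper states that Corollary \ref{nnco1} ``results from the proof of Theorem \ref{tn1}'', meaning precisely that under condition (\ref{ne20.35}) the intermediate statements (I)--(IV) give the first four conclusions with $G=X$, $\lambda=\mu$, and the final conclusion is the observation recorded at (\ref{14.08.1}) (``Note that then $I-\alpha\in\mathrm{Aut}(X)$''), obtained by the same restriction of equation (\ref{n11.04.1}) to $\mathrm{Ker}(I-\widetilde\alpha)$ that you describe.
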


\begin{corollary}\label{co15.08.1} Let $X$ be a discrete torsion Abelian group 
with cyclic $p$-components containing no elements of order $2$.
Assume that all conditions of 
Theorem $\ref{tn1}$ are fulfilled. Then the following statements are true:
\renewcommand{\labelenumi}{\rm(\roman{enumi})}
\begin{enumerate}
\item
if $\mathrm{Ker}(I+\alpha)=\{0\}$, then there is a finite 
subgroup $G$ of the group $X$ such that 
  $\mu_j$ are shifts of the Haar distribution $m_G$;
\item
if the characteristic functions of the distributions $\mu_j$ do not vanish, then
  $\mu_j$ are shifts 
of a distribution
supported in $\mathrm{Ker}(I+\alpha)$.
\end{enumerate}
\end{corollary}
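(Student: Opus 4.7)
The plan is to read off both statements from Theorem~\ref{tn1} without any further analysis of the Heyde functional equation. That theorem produces a subgroup $G\subseteq X$ and a distribution $\lambda$ supported in $G$ such that $\mu_j$ are shifts of $\lambda$, the image $(I+\alpha)(G)$ is finite, and the Haar distribution $m_{(I+\alpha)(G)}$ is a factor of $\lambda$. As observed in the proof of Theorem~\ref{tn1}, every subgroup of $X$ is characteristic, so $\alpha(G)=G$ and the restriction $\alpha_G\in\mathrm{Aut}(G)$ is well defined.

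For statement~(i), I would use the hypothesis $\mathrm{Ker}(I+\alpha)=\{0\}$ to conclude that $I+\alpha_G$ is injective on $G$. Since its image $(I+\alpha)(G)$ is finite, $G$ itself must be finite. An injective endomorphism of a finite group is surjective, so $(I+\alpha)(G)=G$ and hence $m_G=m_{(I+\alpha)(G)}$ is a factor of $\lambda$. Combined with the fact that $\lambda$ is supported in $G$ and the identity $m_G\ast\nu=m_G$ for every distribution $\nu$ supported in $G$ (an immediate consequence of~(\ref{fe22.1}) applied on $A(Y,G)$ and off it), this forces $\lambda=m_G$, whence $\mu_j$ are shifts of $m_G$.

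For statement~(ii), since each $\mu_j$ is a shift of $\lambda$, one has $|\hat\mu_j(y)|=|\hat\lambda(y)|$ on $Y$, so the nonvanishing of $\hat\mu_j$ transfers to $\hat\lambda$. Writing $\lambda=m_{(I+\alpha)(G)}\ast\lambda''$ and invoking~(\ref{fe22.1}), the characteristic function $\hat\lambda$ vanishes off $A(Y,(I+\alpha)(G))$. Nonvanishing therefore forces $A(Y,(I+\alpha)(G))=Y$, i.e.\ $(I+\alpha)(G)=\{0\}$, so $G\subseteq\mathrm{Ker}(I+\alpha)$; then $\lambda$ itself is the required distribution supported in $\mathrm{Ker}(I+\alpha)$.

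There is no serious obstacle here; both parts are short bookkeeping arguments built on Theorem~\ref{tn1}, formula~(\ref{fe22.1}), and the elementary fact that an injective endomorphism of a finite group is surjective. The only point meriting care is keeping straight the distinction between $I+\alpha$ viewed as an endomorphism of $X$ and its restriction $I+\alpha_G$ on $G$ when deducing that $G$ is finite in~(i); this is why the characteristic-subgroup property of every subgroup of $X$ is invoked at the outset.
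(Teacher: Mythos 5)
Your proof is correct and follows essentially the same route as the paper's: both statements are read off from Theorem \ref{tn1} together with formula (\ref{fe22.1}), with the nonvanishing of $\hat\lambda$ forcing $A(Y,(I+\alpha)(G))=Y$ in part (ii) exactly as in the paper. The only (harmless) divergence is in part (i), where the paper first deduces $I+\alpha\in\mathrm{Aut}(X)$ from the cyclic $p$-component structure and then concludes $(I+\alpha)(G)=G$, whereas you obtain finiteness of $G$ from the injectivity of $I+\alpha_G$ combined with the finiteness of its image and then use that an injective endomorphism of a finite group is surjective; both arguments are valid and of the same length.
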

\begin{proof} 
   By Theorem $\ref{tn1}$, there is a subgroup $G$ of the group $X$ such 
   that $(I+\alpha)(G)$ is a finite
group and the distributions $\mu_j$ are shifts of 
a distribution $\lambda$ supported in $G$. 

Let us prove statement (i). Inasmuch as $\mathrm{Ker}(I+\alpha)=\{0\}$ and $X$ 
is a discrete torsion Abelian group 
with cyclic $p$-components, 
we have $I+\alpha\in\mathrm{Aut}(X)$. This implies that $(I+\alpha)(G)=G$. 
Hence $G$ is a finite group and $m_{(I+\alpha)(G)}=m_{G}$. By Theorem  $\ref{tn1}$, 
the Haar distribution  $m_{(I+\alpha)(G)}$ is a factor of
$\lambda$. It follows from this that the Haar distribution  $m_G$ is a factor of
$\lambda$. Taking into account that the distribution $\lambda$ is supported in $G$, 
we have $\lambda=m_G$. 

Let us prove statement (ii). Denote by by $H$ the character group of the group $G$.
 By Theorem $\ref{tn1}$, the Haar distribution 
 $m_{(I+\alpha)(G)}$ is a factor of  $\lambda$. This implies that 
 the characteristic  function $\hat\lambda(h)$ is of the form
 \begin{equation}
 \label{15.08.1}\hat\lambda(h)=
 \begin{cases}
 \psi(h) & \text{\ if\ }\ \ h\in\mathrm{Ker}(I+\widetilde\alpha_G),
 \\ 0 & \text{\ if\ }\ \ h\notin\mathrm{Ker}(I+\widetilde\alpha_G),
 \end{cases}
 \end{equation}
where $\psi(h)$ is 
 a characteristic function on the subgroup $\mathrm{Ker}(I+\widetilde\alpha_G)$.
 Since the characteristic functions $\hat\mu_j(y)$ do not vanish,   
the characteristic function  $\hat\lambda(h)$ also does not vanish.
Taking into account (\ref{15.08.1}), this means that 
$\mathrm{Ker}(I+\widetilde\alpha_G)=H$, 
 i.e., $\widetilde\alpha_G h=-h$ for all $h\in H$. Hence $\alpha_G g=-g$ for 
all $g\in G$.
This implies that
 $G=\mathrm{Ker}(I+\alpha_G)\subset \mathrm{Ker}(I+\alpha)$, i.e., $\lambda$
is supported in $\mathrm{Ker}(I+\alpha)$.
\end{proof}

\begin{remark}
Let us discuss the question of the uniqueness of the subgroup $G$ in Theorem \ref{tn1}. 

Assume
that $\mathrm{Ker}(I+\alpha)=\{0\}$ and will prove that in this case $G$ is 
uniquely determined. Indeed, let all the conditions of Theorem \ref{tn1} be 
satisfied. It follows from item (i) of Corollary \ref{co15.08.1} that then 
$\mu_j=E_{x_j}*m_G$, $j=1, 2$, where $x_j\in X$ and $G$ is a finite subgroup of $X$. 
Suppose that $\mu_j=E_{\tilde x_j}*m_{\widetilde G}$, $j=1, 2$, where $\tilde x_j\in X$ and 
$\widetilde G$ is a finite subgroup of $X$. Hence $E_{x_1}*m_G=E_{\tilde x_1}*m_{\widetilde G}$. This implies that $G=\widetilde G$.

Assume
that $\mathrm{Ker}(I+\alpha)\not=\{0\}$. Take $x_1, x_2\in X$ such that
\begin{equation}\label{10.1}
x_1+\alpha x_2=0
\end{equation} 
and 
\begin{equation}\label{10.2}
x_0\in \mathrm{Ker}(I+\alpha).
\end{equation}
 Put $\mu_j=E_{x_j+x_0}$, $j=1, 2$.
 It follows from (\ref{10.1}) and (\ref{10.2}) that the characteristic functions
$\hat\mu_j(y)$ satisfy equation  (\ref{11.04.1}). Let $\xi_j$ be independent random
variables with values in the group $X$ and distributions $\mu_j$. Let $G$ be the subgroup
of $X$ generated by $x_0$. Then $\mu_j=E_{x_j}*E_{x_0}$, $j=1, 2$, and
statements (i)--(v) of Theorem \ref{tn1} are fulfilled for
$\lambda=E_{x_0}$ and $G$. Since $\mathrm{Ker}(I+\alpha)\not=\{0\}$, we can take 
$x_0\ne 0$. Hence $G\ne\{0\}$.   Obviously, statements (i)--(v) of Theorem \ref{tn1} 
are also fulfilled for
$\tilde\lambda=E_{0}$ and $\widetilde G=\{0\}$. 
\end{remark}

Statements (iii) and (iv) of Theorem \ref{tn1} can be strengthened 
if we assume that each of $p$-component  of the group $X$ is isomorphic to 
$\mathbb{Z}(p)$. Unlike the proof of statements (iii) and (iv) of 
Theorem \ref{tn1}, we 
 prove the corresponding statements without using 
Lemmas \ref{newle18.01.2} and \ref{s5.6}. Our proof is based on 
 the following  analogue of Heyde's theorem for discrete Abelian groups.
\begin{lemma}[{\!\!\protect\cite{FeTVP}, see also 
		\cite[Theorem 10.8]{Febooknew}}] 
 \label{ln1}
  Let $X$ be a countable discrete Abelian group containing no elements of order
 $2$. Let $\alpha$ be an   automorphism of
$X$ satisfying the condition
\begin{equation*}\label{2eq1}
\mathrm{Ker}(I+\alpha)=\{0\}.
\end{equation*}
Let $\xi_1$ and $\xi_2$ be independent random variables with values
 in the group $X$ and distributions  $\mu_1$ and $\mu_2$.
If  the conditional distribution of the linear form $L_2 =
 \xi_1 + \alpha\xi_2$ given $L_1 =  \xi_1 +
 \xi_2$ is symmetric, then    $\mu_j=m_K*E_{x_j}$, where $K$ is a finite subgroup of
  $X$  and $x_j\in X$, $j=1, 2$. Moreover,
$\alpha(K)=K$. 
\end{lemma}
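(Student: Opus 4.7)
The plan is to mirror the architecture of the proof of Theorem \ref{tn1} while replacing its polynomial-equation machinery (Lemmas \ref{newle18.01.2} and \ref{s5.6}) by the stronger algebraic constraint that $\mathrm{Ker}(I+\alpha)=\{0\}$. By Lemma \ref{lem1}, the symmetry condition becomes the Heyde equation (\ref{11.04.1}) for $\hat\mu_j$ on the compact metrizable group $Y=X^*$; since $X$ is $2$-torsion-free, $f_2$ is injective on $X$ and, on the relevant subgroups that appear, also bijective, so division by $2$ is legitimate throughout. Setting $H=\{y\in Y:|\hat\mu_1(y)|=|\hat\mu_2(y)|=1\}$ and verifying (via standard manipulations of (\ref{11.04.1})) that $H^{(2)}=H$ and $\widetilde\alpha(H)=H$, one applies Lemma \ref{lem11} to shift $\mu_j$ so that the shifted distributions are supported in $G=A(X,H)$. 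Passing to the problem on $G$ with dual $Y/H$ reduces us to the case $H=\{0\}$; it then suffices to show, in this reduced setting, that $\mu_1=\mu_2=m_K$ for some finite $\alpha$-invariant $K\subseteq X$.

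Restrict (\ref{11.04.1}) to $P=\mathrm{Ker}(I+\widetilde\alpha)$, where $\widetilde\alpha v=-v$; setting $u=v$ gives $\hat\mu_1(2y)=\hat\mu_2(2y)$ on $P$, hence $\hat\mu_1=\hat\mu_2$ on $P$ after dividing by $2$. The crucial step is to prove $\hat\mu_j(y_0)=0$ for every $y_0\notin P$. For $y_0$ of finite order $n$, pass to the finite cyclic subgroup $T=\langle y_0\rangle$, viewed as the dual of the finite cyclic quotient $S=X/A(X,T)$ of order $n$. The restrictions $\hat\mu_j|_T$ satisfy the Heyde equation on $S$ with respect to the induced automorphism $\kappa$, and condition (\ref{ne20.35}) descends to $T$. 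When $n$ is odd, Lemma \ref{nth1} together with (\ref{fe22.1}) gives $\hat\mu_j(y_0)=0$ outside $\mathrm{Ker}(I+\widetilde\kappa)=P\cap T$; when $n$ is even (which can occur because $X$ being $2$-torsion-free does not preclude $Y$ — e.g.\ $X=\mathbb{Z}$, $Y=\mathbb{R}/\mathbb{Z}$ — from having $2$-torsion), one must invoke the corresponding finite-cyclic Heyde result for even order. For $y_0$ of infinite order, I would approximate $y_0$ by a sequence of finite-order characters $y_n\to y_0$ that are eventually outside $P$ (exploiting countability of $X$ and second countability of $Y$) and pass to the limit via continuity of $\hat\mu_j$. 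The upshot is $\mu_1=\mu_2=\mu$ with $\hat\mu=0$ off $P$.

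The endgame exploits $\mathrm{Ker}(I+\alpha)=\{0\}$: by duality $(I+\widetilde\alpha)(Y)$ is dense, hence equal to $Y$ by compactness, so $I+\widetilde\alpha$ is surjective on $Y$. Combined with the support of $\hat\mu$ on $P$ and with (\ref{11.04.1}), this forces $P$ to be open in $Y$, equivalently $K:=A(X,P)$ is a finite subgroup of $X$; since $\widetilde\alpha|_P=-I$, both $P$ and $K$ are $\widetilde\alpha$- and $\alpha$-invariant. Comparing $\hat\mu$ with $\widehat{m_K}$ via (\ref{fe22.1}) yields $\hat\mu(y)=(x,y)\widehat{m_K}(y)$, i.e.\ $\mu=m_K*E_x$, and reinstating the shifts from the initial reduction gives the stated $\mu_j=m_K*E_{x_j}$. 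The main obstacle will be the openness of $P$: Theorem \ref{tn1}'s polynomial-equation route is deliberately unavailable here, and one must instead combine surjectivity of $I+\widetilde\alpha$ with the precise support structure of $\hat\mu$ on $P$ (which was forced by $H=\{0\}$) through a direct structural argument on the functional equation. Secondary technical hurdles are the approximation step for infinite-order $y_0$ and the even-order finite-cyclic passage, both of which are sensitive to the presence of $2$-torsion in $Y$.
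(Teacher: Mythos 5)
First, a point of reference: the paper does not prove Lemma \ref{ln1} at all --- it is imported verbatim from \cite{FeTVP} (see also \cite[Theorem 10.8]{Febooknew}) and used as a black box in the proof of Theorem \ref{ntn1}. Your proposal therefore has to stand on its own, and it does not: the plan of transplanting the proof of Theorem \ref{tn1} fails because that proof leans at every step on the special structure $X=\mathop{\mathbf{P}^*}\limits_{p_j}\mathbb{Z}(p_j^{k_j})$ with distinct odd primes, none of which survives for a general countable discrete $X$ without elements of order $2$. Concretely: (1) the step ``$\hat\mu_1(2y)=\hat\mu_2(2y)$ on $\mathrm{Ker}(I+\widetilde\alpha)$, hence $\hat\mu_1=\hat\mu_2$ there'' needs $f_2$ to map that subgroup \emph{onto} itself; for $X=\mathbb{Z}$, $\alpha=I$ one has $\mathrm{Ker}(I+\widetilde\alpha)=\{0,1/2\}\subset\mathbb{T}$ and $f_2$ annihilates it --- absence of $2$-torsion in $X$ does not prevent $2$-torsion in subgroups of $Y$ or in quotients of $X$, and the same defect undermines the hypothesis $H^{(2)}=H$ needed to invoke Lemma \ref{lem11}. (2) Restricting the Heyde equation to $T=\langle y_0\rangle$ requires $\widetilde\alpha(T)=T$; in Theorem \ref{tn1} every subgroup is characteristic, but for $X=\mathbb{Z}^2$ and $\alpha\in\mathrm{Aut}(\mathbb{Z}^2)$ essentially no cyclic subgroup of $\mathbb{T}^2$ is $\widetilde\alpha$-invariant, so there is no induced equation on $S$ to which Lemma \ref{nth1} (or any even-order surrogate, which the paper does not supply) could be applied. (3) Approximating an infinite-order $y_0$ by finite-order characters requires the torsion subgroup of $Y$ to be dense; for $X=\mathbb{Q}$ the dual $Y$ is torsion-free, so there is nothing to approximate with.

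Second, the endgame is not merely ``the main obstacle''; as stated it is false. Since $\mathrm{Ker}(I+\alpha)=\{0\}$ and $X$ is discrete, $A(X,P)=A\bigl(X,A(Y,(I+\alpha)(X))\bigr)=(I+\alpha)(X)\cong X$, so your $K:=A(X,P)$ is finite if and only if $X$ itself is finite; equivalently, ``$P$ is open in $Y$'' is exactly the assertion that, after the reduction to $H=\{0\}$, the whole group is finite and $\mu_1=\mu_2=m_X$. That assertion is the entire content of the lemma in the reduced situation, and no argument is offered for it: surjectivity of $I+\widetilde\alpha$ on $Y$ together with ``$\hat\mu=0$ off $P$'' does not yield it, and the vanishing statement itself was obtained only through the inapplicable finite-cyclic reduction. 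If you want a proof within the toolkit of this paper rather than citing \cite{FeTVP}, the more promising template is the $a=-1$ part of the proof of Theorem \ref{nnth1}, which avoids restriction to cyclic subgroups and instead iterates the relations of Lemma \ref{newle1} via $\kappa=-f_4\widetilde\alpha(I-\widetilde\alpha)^{-2}$; but even that argument is carried out there only for torsion groups with cyclic $p$-components, and extending it to arbitrary countable discrete $X$ is precisely the work done in the cited reference.
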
  
\begin{theorem}\label{ntn1} Let $X$ be a discrete Abelian group of the form  
\begin{equation*}\label{en1}
X=\mathop{\mathbf{P}^*}\limits_{p_j \in {\mathcal P}}\mathbb{Z}(p_j),
\end{equation*}
where ${\mathcal P}$ is a set of pairwise distinct prime numbers such that 
$2\notin {\mathcal P}$.
 Let  $\alpha$ be an 
 automorphism of the group $X$. 
 Let $\xi_1$ and $\xi_2$ be
independent random variables with values in $X$ and distributions
$\mu_1$ and $\mu_2$. Assume that the conditional  distribution of the 
linear form $L_2 = \xi_1 + \alpha\xi_2$ given $L_1 = \xi_1 + \xi_2$ is 
symmetric. Then  there is a   subgroup $G$ of the group $X$ and  
a distribution  
$\lambda$ supported in $G$ such that 
the following statements are true:
\renewcommand{\labelenumi}{\rm(\roman{enumi})}
\begin{enumerate}
\item
$\mu_j$ are shifts of $\lambda$;
	\item
$G$ is the minimal subgroup containing the support of $\lambda$;
\item
$G=G_1\times G_2$, where $G_1=(I+\alpha)(G)$  is a finite group and $G_2$ is a subgroup  of $G$;
\item
$\lambda=m_{G_1}*\omega$, where the distribution $\omega$ is supported in the subgroup $G_2$;
\item
if $\eta_j$ are independent identically distributed random variables 
with values in    
the group $X$  and distribution  $\lambda$, then    the conditional  
distribution of the linear form
$M_2=\eta_1 + \alpha\eta_2$ given $M_1=\eta_1 + \eta_2$ is symmetric.
\end{enumerate} 
\end{theorem}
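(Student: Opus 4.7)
The plan is to follow the scheme of the proof of Theorem \ref{tn1} and to replace only the step that uses Lemmas \ref{newle18.01.2} and \ref{s5.6} by an application of Lemma \ref{ln1} on a suitable direct summand. The preliminary reductions carry over without change: every subgroup of $X$ is of the form $\mathop{\mathbf{P}^*}_{p_j\in\mathcal{S}}\mathbb{Z}(p_j)$ and hence characteristic, so I may assume that the minimal subgroup of $X$ containing the supports of $\mu_1,\mu_2$ equals $X$ itself; using Lemma \ref{lem11} I further reduce to the case in which $\{y\in Y:|\hat\mu_1(y)|=|\hat\mu_2(y)|=1\}=\{0\}$. The first part of the proof of Theorem \ref{tn1}, which uses only Lemma \ref{nth1} applied to finite cyclic subgroups, then gives $\mu_1=\mu_2=:\mu$ and shows that $\hat\mu(y)=0$ for every $y\notin\mathrm{Ker}(I+\widetilde\alpha)$.

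The new step is a decomposition of $X$ according to the action of $\alpha$ on each prime component. Since each $\mathbb{Z}(p_j)$ has prime order, every automorphism of it is multiplication by some integer coprime to $p_j$, and so $I+\alpha_{\mathbb{Z}(p_j)}$ is either identically zero (when $\alpha_{\mathbb{Z}(p_j)}=-I$) or an automorphism of $\mathbb{Z}(p_j)$. Put $\mathcal{P}_2=\{p_j\in\mathcal{P}:\alpha_{\mathbb{Z}(p_j)}=-I\}$, $\mathcal{P}_1=\mathcal{P}\setminus\mathcal{P}_2$, and $X_i=\mathop{\mathbf{P}^*}_{p_j\in\mathcal{P}_i}\mathbb{Z}(p_j)$. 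Then $X=X_1\oplus X_2$, both summands are $\alpha$-invariant, $\alpha_{X_2}=-I$, and $I+\alpha_{X_1}\in\mathrm{Aut}(X_1)$; in particular $\mathrm{Ker}(I+\alpha_{X_1})=\{0\}$, which is exactly the hypothesis of Lemma \ref{ln1} on $X_1$. Letting $\pi_1:X\to X_1$ denote the projection, the random variables $\pi_1(\xi_1),\pi_1(\xi_2)$ are independent and, since $\pi_1$ commutes with $\alpha$ and with the group operation, inherit the symmetry of the conditional distribution of $\pi_1(\xi_1)+\alpha_{X_1}\pi_1(\xi_2)$ given $\pi_1(\xi_1)+\pi_1(\xi_2)$. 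Lemma \ref{ln1} therefore yields $(\pi_1)_*\mu_j=m_K*E_{y_j}$ for a finite subgroup $K$ of $X_1$ with $\alpha_{X_1}(K)=K$ and some $y_j\in X_1$.

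From this the remaining statements follow quickly. The supports $\pi_1(\mathrm{supp}\,\mu_j)\subset(K+y_1)\cup(K+y_2)$ form a finite set, but by the minimality assumption they must generate $X_1$; since every finitely generated Abelian torsion group is finite, $X_1$ itself is finite. Consequently $(I+\alpha)(X)=(I+\alpha_{X_1})(X_1)=X_1$ is finite, and setting $G_1:=X_1$, $G_2:=X_2$ gives the splitting $X=G_1\times G_2$ required by (iii). Statement (iv) is then immediate: since $\hat\mu$ vanishes outside $\mathrm{Ker}(I+\widetilde\alpha)=A(Y,G_1)$ and $\widehat{m_{G_1}}$ is the indicator of this annihilator by (\ref{fe22.1}), the distribution $\omega:=(\pi_2)_*\mu$, which is supported on $G_2$, satisfies $\mu=m_{G_1}*\omega$. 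Statements (i), (ii) and (v) are obtained exactly as in the proof of Theorem \ref{tn1}.

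The main obstacle is the decomposition step, and this is exactly where the hypothesis that each $p$-component of $X$ is $\mathbb{Z}(p)$ (rather than $\mathbb{Z}(p^k)$ with $k\ge 2$) becomes essential: it is only under this assumption that one has the clean dichotomy ``$I+\alpha_{\mathbb{Z}(p_j)}$ is zero or an automorphism'', so that the splitting $X=X_1\oplus X_2$ with $\mathrm{Ker}(I+\alpha_{X_1})=\{0\}$ actually exists and Lemma \ref{ln1} can be applied directly. The accompanying verification that the Heyde symmetry descends under $\pi_1$ and that the minimality of supports transfers to $X_1$ is routine, but the passage from ``$(\pi_1)_*\mu_j$ is supported on finitely many cosets of a finite group'' to ``$X_1$ itself is finite'' is the crux of the argument.
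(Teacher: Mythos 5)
Your proposal is correct and follows essentially the same route as the paper: the same reduction to condition (\ref{ne20.35}), the same splitting $X=G_1\times G_2$ according to whether $\alpha_{\mathbb{Z}(p_j)}=-I$, and the same application of Lemma \ref{ln1} to the summand on which $I+\alpha$ is an automorphism. The only (immaterial) difference is that you deduce finiteness of $G_1$ from the fact that a finitely generated torsion group is finite, whereas the paper notes that $|\hat\mu|=1$ on $A(G_1^*,F)$ and invokes (\ref{27ne20.35}); both arguments are valid.
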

\begin{proof}
Denote by $Y$ the character group of the group $X$. The group $Y$ is topologically 
isomorphic to the direct product of the groups 
$\mathbb{Z}(p_j)$, where ${p_j \in {\mathcal P}}$. 
To avoid introducing 
additional notation, we assume that
\begin{equation*} 
Y=\mathop{\mathbf{P}}\limits_{p_j \in {\mathcal P}}\mathbb{Z}(p_j).
\end{equation*}
Arguing as in the proof of Theorem \ref{tn1}, we can suppose that 
condition (\ref{ne20.35}) is fulfilled. We will prove that in this case 
there are distributions $\mu$ and $\omega$ such that 
the following statements are true:
\renewcommand{\labelenumi}{\rm(\Roman{enumi})}
\begin{enumerate}
\item
$\mu_1=\mu_2=\mu$;
	\item
$X$ is the minimal subgroup containing the support of $\mu$;
\item
			$X=G_1\times G_2$, where $G_1=(I+\alpha)(X)$ is a finite 
			group and  $G_2$ is a subgroup  of $X$;
			
			\item
$\mu=m_{G_1}*\omega$, where the distribution $\omega$ is supported in the subgroup $G_2$.
\end{enumerate}
Then the theorem will be proved.

Statements (I) and (II) are proved in the same way as statements (I) and (II) 
 of Theorem \ref{tn1}. Note that in the proof of statements (I) and (II) 
 we do not use Lemmas \ref{newle18.01.2} and \ref{s5.6}. 
 
 Let us prove statement (III). Each automorphism of the group 
 $\mathbb{Z}(p_j)$ is the multiplication 
by a  natural number $m$,  i.e., coincides with an endomorphism $f_{m}$, 
where $m\in \{1, 2, \dots, p_j-1\}$.  Moreover, if $m\ne p_j-1$, then 
$\mathrm{Ker}(I+f_m)=\{0\}$ and $f_{p_j-1}=-I$.  Set
$$
{\mathcal P}_1 =\{p_j \in{\mathcal P}:\alpha_{\mathbb{Z}(p_j)}\ne -I\}, \quad
{\mathcal P}_2 =\{p_j \in{\mathcal P}:\alpha_{\mathbb{Z}(p_j)}=-I\}$$$$
G_1=\mathop{\mathbf{P}^*}\limits_{p_j \in {\mathcal P}_1}\mathbb{Z}(p_j), \quad G_2=\mathop{\mathbf{P}^*}\limits_{p_j \in {\mathcal P}_2}\mathbb{Z}(p_j).
$$
Then ${\mathcal P}={\mathcal P}_1\cup{\mathcal P}_2$,  
${\mathcal P}_1\cap{\mathcal P}_2=\emptyset$ and hence 
\begin{equation}\label{13.08.2}
X=G_1\times G_2.
\end{equation}
As is easily seen, 
\begin{equation}\label{en3}
I+\alpha_{G_1}\in\mathrm{Aut}(G_1), 
\end{equation}
\begin{equation}\label{pen2}
\alpha_{G_2}=-I,
\end{equation}

Denote by  $\pi$ the  distributions on the group $G_1$ with 
the characteristic function 
\begin{equation}\label{en6}
\hat\pi(y)=\hat\mu(y), \quad y\in G_1^*.
\end{equation}
The characteristic function $\hat\mu(y)$   satisfies equation (\ref{n11.04.1}).
Since $\widetilde\alpha(G_1^*)=G_1^*$, 
the characteristic function  
 $\hat\pi(y)$ satisfies the equation
\begin{equation}\label{14.08.4}
\hat\pi(u+v )\hat\pi(u+\widetilde\alpha_{G_1} v )=
\hat\pi(u-v )\hat\pi(u-\widetilde\alpha_{G_1} v), \quad u, v \in G_1^*.
\end{equation} 
  Let $\zeta_1$ and $\zeta_2$ 
be independent identically distributed random 
variables with values in the group
$G_1$ and distribution  $\pi$. 
By  Lemma \ref{lem1}, it follows from (\ref{14.08.4}) that 
the conditional  distribution of 
the linear form $N_2 = \zeta_1 + \alpha_{G_1}\zeta_2$ given 
$N_1 = \zeta_1 + \zeta_2$ 
is symmetric.
Taking into account (\ref{en3}), we can apply Lemma  
\ref{ln1} to the discrete group $G_1$, the automorphism $\alpha_{G_1}$,   
the random variables $\zeta_1$ and $\zeta_2$, and the distributions  
$\pi_1=\pi_2=\pi$. 
We get that there is a finite subgroup $F$ of the group $G_1$ and 
 elements  $g_j\in G_1$ such that
\begin{equation}\label{en7}
\pi_j=m_F*E_{g_j},\quad j=1, 2.
\end{equation} 
Taking into account (\ref{en6}), we obtain from (\ref{en7}) that
\begin{equation*}\label{2en7}
\hat\mu(y)=\hat\pi(y)=\widehat m_F(y)(g_j, y), \quad y\in G_1^*, \ \ j=1, 2.
\end{equation*}
In view of (\ref{fe22.1}), this implies that
\begin{equation}\label{13.08.1}
|\hat\mu(y)|=1, \quad y\in A(G_1^*, F).
\end{equation}
Since condition (\ref{27ne20.35}) is fulfilled, we get from (\ref{13.08.1})
that $A(G_1^*, F)=\{0\}$, 
i.e., $G_1=F$. Hence  $G_1$ is a finite group. 
It follows from (\ref{13.08.2})--(\ref{pen2}) that $G_1=(I+\alpha)(X)$. 
Thus, statement (III) is proved.

Let us prove statement (IV). 
In view of (\ref{13.08.2}), the group $Y$ is topologically isomorphic to 
the direct product of the groups $G_1^*$ and $G_2^*$. To avoid introducing 
new notation, we suppose that $Y=G_1^*\times G_2^*$ and
denote by $(a, b)$, where $a\in G_1^*$, $b\in G_2^*$, elements of the group $Y$.
We have 
$$
\mathrm{Ker}(I+\widetilde\alpha)=A(Y, (I+\alpha)(X))=A(Y, G_1)=G_2^*.$$ 
 The 
characteristic function of the distribution $\mu$ is of the form
(\ref{en10}). Hence
\begin{equation}
 \label{13.08.3}\hat\mu(a, b)=
 \begin{cases}
 \psi(b) & \text{\ if\ }\ \ a=0,
 \\ 0 & \text{\ if\ }\ \ a\ne 0.
 \end{cases}
 \end{equation}
Let $\omega$ be the distribution on the group $X$ with the 
characteristic function
\begin{equation}
 \label{d1}\hat\omega(a, b)=\psi(b), \quad (a, b)\in Y.
  \end{equation}
Since $\hat\omega(a, b)=1$ for all $a\in G_1^*$ and $A(X, G_1^*)=G_2$, 
it follows from Lemma \ref{lem2} that
$\omega$ is supported in $G_2$. It remains to be verified that
$\mu=m_{G_1}*\omega$.
Taking into account (\ref{fe22.1}) and the fact that $A(Y, G_1)=G_2^*$, the 
characteristic 
function $\widehat m_{G_1}(a, b)$
is of the form
\begin{equation}\label{nd2}
\widehat m_{G_1}(a, b)=
\begin{cases}
1 & \text{\ if\ }\ \ a=0,
\\ 0 & \text{\ if\ }\ \ a\ne 0.
\end{cases}
\end{equation}
It follows from
(\ref{13.08.3})--(\ref{nd2}) that $\hat\mu(a, b)=
\widehat m_{G_1}(a, b)\hat\omega(a, b)$ for all $(a, b)\in Y$.
Hence $\mu=m_{G_1}*\omega$ and statement (IV) is also proved.
The theorem is completely proved.
\end{proof}
 
\section{ Heyde theorem for the direct product of 
the group of real numbers and a discrete torsion Abelian group 
with cyclic $p$-components}

Consider a group of the form $X=\mathbb{R}\times K$, where $K$ 
 is a discrete torsion Abelian group. Denote by $(t, k)$, where 
 $t\in \mathbb{R}$, $k\in K$,  elements
 of the group $X$. Let $\alpha\in\mathrm{Aut}(X)$. Since $\mathbb{R}$ 
is the connected component of the zero of the group $X$ and $K$ 
is the subgroup of $X$ consisting of elements of finite order, 
$\mathbb{R}$ and $K$ are characteristic subgroups of $X$. This implies that
 $\alpha$ acts on  elements of  the group $X$ 
 as follows: $\alpha(t, k)=(at, \alpha_Kk)$, where $a\ne 0$. We will write
 $\alpha$  in the form $\alpha=(a, \alpha_K)$.
  
We   prove in this section the following generalization of Heyde's theorem for the group 
$\mathbb{R}\times K$, where $K$  is a discrete torsion Abelian group 
with cyclic $p$-components containing no elements of order $2$. 

\begin{theorem}\label{nnth1}  Let $X=\mathbb{R}\times K$, where $K$ 
is a discrete torsion Abelian group with cyclic $p$-components  
  containing no elements of order $2$. Let  $\alpha=(a, \alpha_K)$ be a topological
automorphism of the group $X$. Let $\xi_1$ and $\xi_2$ be	independent random 
variables with values in   $X$ and distributions $\mu_1$ and $\mu_2$. Assume 
that the conditional  distribution of the 	linear form $L_2 = \xi_1 + \alpha\xi_2$ 
given $L_1 = \xi_1 + \xi_2$ is symmetric.
\renewcommand{\labelenumi}{\upshape\arabic{enumi}.}	
\begin{enumerate}
 
\item		
		If $a\ne -1$, then  there is a   subgroup $G$ of the group $K$,
		Gaussian distributions $\gamma_j$ on $\mathbb{R}$, and  
		a distribution $\omega$ supported in $G$ such that 
		the following statements are true:
		\renewcommand{\labelenumii}{\upshape(\roman{enumii})}
		\begin{enumerate}
			\item
			$\mu_j$ are shifts of the distributions  $\lambda_j=\gamma_j*\omega$; 
			\item
			$G$ is the minimal subgroup containing the support of $\omega$;
			\item
			$(I+\alpha)(G)$ is a finite
group;
\item
			the Haar distribution $m_{(I+\alpha)(G)}$ is a factor of  $\omega$;
\item			
			if $\eta_j$ are independent random variables 
with values in    
the group $X$  and distributions  $\lambda_j$, then    the conditional  
distribution of the linear form
$M_2=\eta_1 + \alpha\eta_2$ given $M_1=\eta_1 + \eta_2$ is symmetric.
		\end{enumerate} 
 
\item		
		If $a=-1$, then  there is a   subgroup $G$ of the group $K$ 
	and  a distribution $\mu$ supported
		in $\mathbb{R}\times G$ such that the following statements are true:
		\renewcommand{\labelenumii}{\upshape(\roman{enumii})}
		\begin{enumerate}
		\item
		$\mu_j$ are shifts of $\mu$; 
				\item
				$(I+\alpha)(\mathbb{R}\times G)$ is a finite
group;
\item
		the Haar distribution $m_{(I+\alpha)(\mathbb{R}\times G)}$ is a factor of  $\mu$;
		\item
		if $\eta_j$ are independent identically distributed random variables 
with values in    
the group $X$  and distributions  $\mu$, then    the conditional  
distribution of the linear form
$M_2=\eta_1 + \alpha\eta_2$ given $M_1=\eta_1 + \eta_2$ is symmetric.
		\end{enumerate} 
	\end{enumerate}
	\end{theorem}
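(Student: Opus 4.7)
The plan is to follow the template of the proof of Theorem \ref{tn1} while handling the continuous component of $X$ with classical tools. Denote by $Y = X^*$ the character group. Since $\mathbb{R}$ is self-dual and $K^* =: L$ is compact and topologically isomorphic to $\mathop{\mathbf{P}}\limits_{p_j \in {\mathcal P}}\mathbb{Z}(p_j^{k_j})$, we have $Y = \mathbb{R}\times L$, and the adjoint automorphism acts coordinatewise as $\widetilde\alpha = (a, \widetilde\alpha_K)$. By Lemma \ref{lem1}, the symmetry hypothesis is equivalent to the Heyde functional equation (\ref{11.04.1}) on $Y$.

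First I would carry out the usual reduction. Put $H = \{y \in Y: |\hat\mu_1(y)|=|\hat\mu_2(y)|=1\}$, which is a closed subgroup of $Y$ by Lemma \ref{lem2}. Since $K$ has no elements of order $2$ and $f_2$ is a topological automorphism of $\mathbb{R}$, we obtain $H^{(2)}=H$. As $\mathbb{R}$ (the connected component of zero) and $K$ (the torsion subgroup) are characteristic subgroups of $X$, one verifies that $A(X,H)$ is $\alpha$-invariant, so Lemma \ref{lem11} replaces $\mu_j$ by shifted distributions supported in $A(X,H)$, reducing the analysis to the case $H = \{0\}$.

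For Case 1 ($a \ne -1$), I would exploit the projections. Projecting along $\pi_{\mathbb{R}}$ preserves the symmetry condition, giving a classical Heyde setup on $\mathbb{R}$ with coefficient $a \ne -1$; hence the $\mathbb{R}$-marginals $\pi_{\mathbb{R}}(\mu_j)$ are Gaussian $\gamma_j * E_{r_j}$. Projecting along $\pi_K$ and invoking Theorem \ref{tn1} furnishes a subgroup $G \subset K$ and a distribution $\omega$ on $G$ with $(I+\alpha_K)(G)$ finite and $m_{(I+\alpha_K)(G)}$ a factor of $\omega$. The challenging step is to lift this marginal data to the joint factorization $\mu_j = E_{x_j}*\gamma_j*\omega$ with the same $\omega$ for both $j$. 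For this I would set $\nu_j = \mu_j * \bar\mu_j$ and $\varphi_j = \ln \hat\nu_j$ on a neighborhood $Y_l = \mathbb{R} \times L_l$ of zero where $\hat\nu_j > 0$, and apply Lemma \ref{newle18.01.2} to obtain that $\varphi_j$ is a continuous polynomial on $(I+\widetilde\alpha)(Y_l) = \mathbb{R} \times (I+\widetilde\alpha_K)(L_l)$. Lemma \ref{s5.6} applied to the compact $L$-factor freezes the $L$-variable, while boundedness of the characteristic function on $\mathbb{R}$ forces the $\mathbb{R}$-dependence to be quadratic; combined with the Theorem \ref{tn1}-style analysis of $\hat\mu_j(0, y)$ on $L$, this delivers the multiplicative decomposition of $\hat\mu_j$.

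For Case 2 ($a = -1$), substituting $v_2 = 0$ in (\ref{11.04.1}) gives $\hat\mu_1(u_1+v_1, u_2) \hat\mu_2(u_1-v_1, u_2) = \hat\mu_1(u_1-v_1, u_2) \hat\mu_2(u_1+v_1, u_2)$, so $\hat\mu_1(s, u_2)/\hat\mu_2(s, u_2)$ is independent of $s \in \mathbb{R}$, and combined with the $L$-side analysis this forces $\mu_1 = \mu_2 = \mu$ (after shifts). The $L$-component is then handled verbatim as in the proof of Theorem \ref{tn1} --- restrict to finite cyclic subgroups of $L$, apply Lemma \ref{nth1}, approximate infinite-order elements --- showing $\hat\mu$ vanishes off $\mathrm{Ker}(I+\widetilde\alpha) = \mathbb{R} \times \mathrm{Ker}(I+\widetilde\alpha_K)$. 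Hence $\mu$ is supported in $\mathbb{R} \times G$ for $G = A(K, \mathrm{Ker}(I+\widetilde\alpha_K))$, and $m_{(I+\alpha)(\mathbb{R}\times G)} = m_{(I+\alpha_K)(G)}$ is a factor of $\mu$ exactly as in statement (IV) of the proof of Theorem \ref{tn1}. The main obstacle is Case 1: extracting the joint factorization with a common $\omega$ from the merely marginal data requires the delicate combination of Lemmas \ref{newle18.01.2} and \ref{s5.6} with Gaussian analysis on $\mathbb{R}$. In each case the residual symmetry assertion for $\lambda_j$ (resp.\ $\mu$) follows by direct substitution into (\ref{11.04.1}).
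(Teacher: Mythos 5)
Your overall skeleton agrees with the paper's (reduce to $H=\{0\}$ via Lemma \ref{lem11}, apply the classical Heyde theorem to the $\mathbb{R}$-marginal, apply Theorem \ref{tn1}/Corollary \ref{nnco1} to the $K$-marginal, then split on $a=-1$), but the two steps you yourself flag as the hard ones are exactly where the proposal has genuine gaps. In Case 1 with $\sigma_j>0$, the paper does \emph{not} lift the marginal data via Lemmas \ref{newle18.01.2} and \ref{s5.6}; it obtains $\hat\mu_j(s,l)=\hat\gamma_j(s)\hat\omega(l)$ by direct substitutions in equation (\ref{nn11.04.1}) (e.g. $s_1=-as$, $s_2=s$, $l_1=-l$, $l_2=l$), using that the Gaussian marginal $\hat\mu_2(-2as,0)$ never vanishes and that $a\ne -1$, and it kills $\hat\mu_j(s,l)$ for $l\notin \mathrm{Ker}(I+\widetilde\alpha_K)$ by a further substitution exploiting that $I-\widetilde\alpha_K$ maps $L\setminus\mathrm{Ker}(I+\widetilde\alpha_K)$ onto itself. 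Your logarithm route has two problems: continuity of $\hat\nu_j$ at $0$ only gives positivity on a neighbourhood $(-\epsilon,\epsilon)\times L_l$, not on the unbounded subgroup $\mathbb{R}\times L_l$ you need for the finite-difference argument; and even granting a polynomial conclusion for $\ln\hat\nu_j$ on a subgroup, that controls only the modulus there and gives neither the phase factorization nor the crucial vanishing of $\hat\mu_j(s,l)$ for $s\ne 0$, $l\notin\mathrm{Ker}(I+\widetilde\alpha_K)$ (your ``Theorem \ref{tn1}-style analysis'' only reaches the slice $s=0$).

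The same difficulty is fatal to your Case 2 plan. Restricting to finite cyclic subgroups and invoking Lemma \ref{nth1} works only for elements of the torsion part $\{0\}\times L$ of $Y$; an element $(s,l)$ with $s\ne 0$ generates an infinite cyclic group that is not the dual of a finite cyclic group, so the ``verbatim'' transfer shows $\hat\mu_j(0,l)=0$ off the kernel but says nothing about $\hat\mu_j(s,l)$ for $s\ne 0$. This is precisely why the paper brings in Lemma \ref{newle1}: for $a=-1$ the automorphism $\kappa=-f_4\widetilde\alpha(I-\widetilde\alpha)^{-2}$ has real component $-4a(1-a)^{-2}=1$, so $\kappa^m y=y$ holds for points whose $L$-coordinate has finite order even when the $\mathbb{R}$-coordinate is nonzero, and the periodicity conclusions (\ref{11.04.14})--(\ref{11.04.15}) combined with $H=\{0\}$ on $L$ force $\hat\nu_j$ to vanish off $\mathrm{Ker}(I+\widetilde\alpha)=\mathbb{R}\times\mathrm{Ker}(I+\widetilde\alpha_K)$. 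Your substitution $v=(v_1,0)$ gives a ratio identity only where the characteristic functions are nonzero and cannot replace this argument. Without these two mechanisms the proof does not close.
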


For the proof we need the following lemma.

\begin{lemma}{\rm(\!\!\cite{F_Heyde_new})}
\label{newle1} 
Let $Y$ be an Abelian group, and let $\beta$ be 
an automorphism of $Y$ such that $I-\beta\in\mathrm{Aut}(Y)$.  
Let $f(y)$ and $g(y)$ be functions on the group $Y$ satisfying the equation
$$
f(u+v)g(u+\beta v)=f(u-v)g(u-\beta v), \quad u, v\in Y.
$$
Then $f(y)$ and $g(y)$ satisfy the equations
\begin{equation}\label{11.04.16}
f(y)=f(-(I+\beta)(I-\beta)^{-1} y)
g(-2\beta(I-\beta)^{-1} y), 
\quad y\in Y, 
\end{equation}
\begin{equation}\label{11.04.8}
g(y)=g((I+\beta)(I-\beta)^{-1} y)f(2(I-\beta)^{-1} y), 
\quad y\in Y. 
\end{equation}
Assume that the inequalities $0\le f(y)\le 1$, $0\le g(y)\le 1$, $y\in Y$, are valid.
Put $\kappa=-f_4\beta(I-\beta)^{-2}$. Let 
$\kappa^my_0=y_0$ for some $y_0\in Y$ and some natural $m$.
Then  
 \begin{equation}\label{11.04.14}
f(y_0)=g(-2\beta(I-\beta)^{-1} y_0), 
\end{equation} 
\begin{equation}\label{11.04.15}
g(y_0)=f(2(I-\beta)^{-1} y_0). 
\end{equation} 
\end{lemma}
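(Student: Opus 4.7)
The plan is to attack the lemma in two stages: first derive the identities (\ref{11.04.16}) and (\ref{11.04.8}) by specializing the variables $u, v$ in the given functional equation, and then propagate them along a $\kappa$-periodic orbit, exploiting the monotonicity built into the assumption $0\le f, g\le 1$.

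For the first stage, I would substitute $u = -\beta v$, which annihilates the factor $u + \beta v$; then setting $y = (I-\beta)v$ and using $I-\beta\in\mathrm{Aut}(Y)$, one obtains $f(y)\,g(0)=f(-Ay)\,g(-By)$, where I abbreviate $A=(I+\beta)(I-\beta)^{-1}$ and $B=2\beta(I-\beta)^{-1}$. Under the implicit normalization $g(0)=1$ (inherited from the characteristic-function setting in which the lemma is applied) this is (\ref{11.04.16}). The symmetric substitution $u=v$, combined with the change of variable $y=(I-\beta)v$, yields (\ref{11.04.8}) with $C=2(I-\beta)^{-1}$. At this point I would record the commutation relations $CB=BC=4\beta(I-\beta)^{-2}=-\kappa$ and the fact that $A$, $B$, $C$, $\kappa$ mutually commute (being rational functions of $\beta$); these are what drive the iteration.

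For the second stage, I would substitute (\ref{11.04.8}) taken at the point $-By$ into (\ref{11.04.16}), obtaining
\begin{equation*}
f(y)=f(-Ay)\,g(-ABy)\,f(\kappa y),
\end{equation*}
and iterate this identity $m$ times. Setting $y=y_0$ and using $\kappa^m y_0=y_0$, the terminal factor $f(\kappa^m y_0)$ closes the loop and leaves
\begin{equation*}
f(y_0)=\Bigl(\prod_{k=0}^{m-1} f(-A\kappa^k y_0)\,g(-AB\kappa^k y_0)\Bigr)f(y_0).
\end{equation*}
Since every factor lies in $[0,1]$, if $f(y_0)>0$ the product must equal $1$, forcing each factor to be $1$; in particular $f(-Ay_0)=1$, and substituting back into (\ref{11.04.16}) gives $f(y_0)=g(-By_0)$, which is (\ref{11.04.14}). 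Equation (\ref{11.04.15}) follows by the symmetric procedure: substitute (\ref{11.04.16}) at $Cy$ into (\ref{11.04.8}) to get $g(y)=g(Ay)\,f(-ACy)\,g(\kappa y)$, iterate, and extract $g(Ay_0)=1$.

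The only subtle point I anticipate is the degenerate case $f(y_0)=0$ (and symmetrically $g(y_0)=0$), where the cancellation step is vacuous. Here I would use that $\kappa$ commutes with $B$, so $\kappa^m(-By_0)=-By_0$, and run the analogous $g$-iteration at $-By_0$: either $g(-By_0)=0$ already, matching $f(y_0)$, or every factor of the resulting product is $1$, which in particular forces $f(-A\kappa^m y_0)=f(-Ay_0)=1$ (after the re-indexing that uses $CB=-\kappa$), and (\ref{11.04.16}) again delivers $f(y_0)=g(-By_0)$. The $g(y_0)=0$ branch is dispatched symmetrically via the $f$-iteration at $Cy_0$. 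The main technical nuisance throughout is the bookkeeping of the algebraic identities $CB=-\kappa$, $A^2=I-\kappa$, and the mutual commutativity with $\beta$; once these are in hand, the iteration and the case analysis are routine.
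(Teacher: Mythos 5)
Your argument is correct and is essentially the standard derivation of this lemma (the paper itself imports it from \cite{F_Heyde_new} without reproducing a proof): the two substitutions $u=-\beta v$ and $u=v$ followed by the change of variable $y=(I-\beta)v$ give (\ref{11.04.16}) and (\ref{11.04.8}), and the iteration of the composed identity along the $\kappa$-periodic orbit, using $0\le f,g\le 1$ to force every factor of the closed product to equal $1$, gives (\ref{11.04.14}) and (\ref{11.04.15}); your treatment of the degenerate branches $f(y_0)=0$, $g(y_0)=0$ is also sound. The one point worth making explicit is that the normalization $f(0)=g(0)=1$ you invoke is not merely convenient but necessary (constant functions $f=g=c\in(0,1)$ satisfy the hypothesis but not (\ref{11.04.16})); it is implicit in the lemma's intended use, where $f$ and $g$ are squared moduli of characteristic functions, and you were right to flag it.
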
  
\noindent\textbf{\emph{Proof of Theorem \ref{nnth1}}} 
Denote by $Y$ the character group of the group $X$ and by   $L$ 
the character group of the group $K$. The group $Y$ is topologically 
isomorphic to the group $\mathbb{R}\times L$. 
Denote by $(s, l)$, 
where $s\in\mathbb{R}$, $l\in L$, elements of the group $Y$. 
By Lemma \ref{lem1}, 
the characteristic functions $\hat\mu_j(s, l)$   satisfy equation (\ref{11.04.1})
which takes the form
\begin{multline}\label{nn11.04.1}
\hat\mu_1(s_1+s_2, l_1+l_2)\hat\mu_2(s_1+as_2, 
l_1+\widetilde\alpha_Kl_2)\\=
\hat\mu_1(s_1-s_2, l_1-l_2)\hat\mu_2(s_1-as_2, 
l_1-\widetilde\alpha_Kl_2), \quad s_j\in \mathbb{R}, \ l_j \in L.
\end{multline} 

1. Assume that $a\ne -1$. Substituting $l_1=l_2=0$ in equation  (\ref{nn11.04.1}) and applying 
Lemma \ref{lem1} and Heyde's theorem  to the group $\mathbb{R}$,
 we obtain
\begin{equation*}
\hat\mu_j(s, 0)=\exp\{-\sigma_js^2+ib_js\}, 
\quad s_j\in \mathbb{R},  \ j=1, 2,
\end{equation*} 
where $\sigma_j\ge 0$, $b_j\in\mathbb{R}$. Inasmuch as  $\sigma_1+a\sigma_2=0$,
this implies that either $\sigma_1=\sigma_2=0$  or $\sigma_1>0$ and $\sigma_2>0$.

Denote by $\gamma_j$ the Gaussian
distribution on the group $\mathbb{R}$ with the characteristic function 
\begin{equation}\label{10mm11.04.1}
\hat\gamma_j(s)=\hat\mu_j(s, 0)=\exp\{-\sigma_js^2+ib_js\}, 
\quad s_j\in \mathbb{R},  \ j=1, 2.
\end{equation} 

1a. Assume that $\sigma_1=\sigma_2=0$, i.e.,
\begin{equation*}
\hat\gamma_j(s)=\hat\mu_j(s, 0)=\exp\{ib_js\}, 
\quad s_j\in \mathbb{R},  \ j=1, 2,
\end{equation*}
where $b_1+ab_2=0$. Set
$$
\lambda_j=\mu_j*E_{-b_j}, \quad j=1, 2.
$$
Then $\hat\lambda_j(s, 0)=1$ for all $s_j\in \mathbb{R}$,  $j=1, 2$. By Lemma
\ref{lem2}, the distributions $\lambda_j$ are supported in $A(X, \mathbb{R})=K$.
Moreover, the characteristic functions  $\lambda_j(0, l)$ satisfy equation (\ref{11.04.1})
on the group $L$. Let $\zeta_j$ 
be independent random variables with values in the group  $K$  and distributions 
$\lambda_j$. By Lemma  \ref{lem1}, the conditional  distribution of the linear form
$N_2=\zeta_1 + \alpha_K\zeta_2$ given $N_1=\zeta_1 + \zeta_2$ is symmetric.
The group $K$ is a discrete torsion Abelian group with cyclic $p$-components  
  containing no elements of order $2$.
The statements of the theorem  follows from Theorem \ref{tn1} 
applying to the group $K$, 
the automorphism $\alpha_K$, the random variables $\zeta_j$, 
and the distributions $\lambda_j$.  The Gaussian distributions 
$\gamma_j$ in this case are degenerated.

1b. Assume that $\sigma_1>0$ and $\sigma_2>0$.  
Putting $s_1=s_2=0$ in equation (\ref{nn11.04.1}), we get
\begin{equation}\label{nnn11.04.1}
\hat\mu_1(0, l_1+l_2)\hat\mu_2(0, l_1+\widetilde\alpha_Kl_2)=
\hat\mu_1(0, l_1-l_2)\hat\mu_2(0, l_1-\widetilde\alpha_Kl_2), \quad 
 l_j \in L.
\end{equation}
Denote by  $\omega_j$ the  distributions on the group $K$ with the characteristic functions
\begin{equation*}\label{2en6}
\hat\omega_j(l)=\hat\mu_j(0, l), \quad l\in L, \ j=1, 2.
\end{equation*}
Let $\zeta_1$ and $\zeta_2$ be independent random variables with values in the group
$K$ and distributions $\omega_1$ and $\omega_2$.
Put
$$
H=\{(0, l)\in Y:|\hat\mu_1(0, l)|=|\hat\mu_2(0,l)=1\}.
$$
By Lemma \ref{lem2}, $H$ is a closed subgroup of $Y$. 
 In addition $H^{(2)}=H$. Taking into account Lemma \ref{lem1} and
applying Lemma \ref{lem11} to the group $K$, the automorphism $\alpha_K$, 
the independent random variables   $\zeta_j$, and  the  distributions $\omega_j$, 
we reduce the 
 proof of the theorem to the case when $H=\{0\}$. We will prove that in this case
$G=K$, statements 1(ii)--1(iv) are true, and $\mu_j=\gamma_j*\omega$, $j=1, 2$, i.e., 
statement  1(i) is also true. 
Thus, the theorem will be proved.
 
It follows from $H=\{0\}$ that
\begin{equation}\label{nne20.35}
\{l\in L:|\hat\omega_1(l)|=|\hat\omega_2(l)|=1\}=\{0\}.
\end{equation}
By Lemma \ref{lem1}, the conditional  distribution of 
the linear form $N_2 = \zeta_1 + \alpha_K\zeta_2$ given 
$N_1 = \zeta_1 + \zeta_2$ 
is symmetric. Since (\ref{nne20.35}) holds, we can apply 
Corollary \ref{nnco1} to the group $K$, 
the automorphism $\alpha_K$, the random 
variables $\zeta_j$, and the
distributions $\omega_j$. We obtain that there is a 
distribution $\omega$ such that 
$\omega_1=\omega_2=\omega$,  $K$ 
is the minimal subgroup containing the support of $\omega$,
$(I+\alpha)(K)$ is a finite
group, and  the Haar distribution $m_{(I+\alpha)(K)}$ 
is a factor of of $\omega$. Thus, (\ref{nne20.35}) implies that 
 statements 1(ii)--1(iv) are true.

It remains to prove  statement  1(i), i.e., $\mu_j=\gamma_j*\omega$, $j=1, 2$. 
Taking into account that $(I+\alpha)(K)=(I+\alpha_K)(K)$,
we obtain from the above that
\begin{equation}\label{6mm11.04.1}
\hat\omega(l)=\hat\mu_1(0, l)=\hat\mu_2(0, l)=\begin{cases}\psi(l)  
&\text{\ if\ }\ \ l\in \mathrm{Ker}(I+\widetilde\alpha_K), \\ 0& 
\text{\ if\ }\ \ l\notin \mathrm{Ker}(I+\widetilde\alpha_K),
\\
\end{cases}  
\end{equation}
where $\psi(l)$ is a characteristic function 
 on 
the subgroup $\mathrm{Ker}(I+\widetilde\alpha_K)$. 

Consider equation (\ref{nn11.04.1}) supposing that 
$l_1, l_2\in \mathrm{Ker}(I+\widetilde\alpha_K)$. Inasmuch as 
$\widetilde\alpha_Kl=-l$ for 
all $l\in\mathrm{Ker}(I+\widetilde\alpha_K)$,   we have
\begin{multline}\label{1m11.04.1}
\hat\mu_1(s_1+s_2, l_1+l_2)\hat\mu_2(s_1+as_2, l_1-l_2)\\=
\hat\mu_1(s_1-s_2, l_1-l_2)\hat\mu_2(s_1-as_2, l_1+l_2), 
\quad s_j\in \mathbb{R}, \ l_j \in \mathrm{Ker}(I+\widetilde\alpha_K).
\end{multline} 
Substituting $s_1=-as$, $s_2=s$, $l_1=-l$, $l_2=l$ 
in equation  (\ref{1m11.04.1}), we get
\begin{equation}\label{2m11.04.1}
\hat\mu_1((1-a)s, 0)\hat\mu_2(0, -2l)=
\hat\mu_1(-(1+a)s, -2l)\hat\mu_2(-2as, 0), 
\quad s\in \mathbb{R}, \ l \in \mathrm{Ker}(I+\widetilde\alpha_K).
\end{equation} 
It follows from (\ref{10mm11.04.1}) that $\hat\mu_2(-2as, 0)\ne 0$ for all  
$s\in \mathbb{R}$. Taking into account that $a\ne -1$ and 
$f_2$ is an automorphism of any subgroup of
the group $L$, we obtain from (\ref{2m11.04.1}) that
there are some functions $F_1(s)$, $s\in \mathbb{R}$, and $G_1(l)$,  
$l\in \mathrm{Ker}(I+\widetilde\alpha_K)$, such that
$$
\hat\mu_1(s, l)=F_1(s)G_1(l), 
\quad s\in \mathbb{R}, \ l\in \mathrm{Ker}(I+\widetilde\alpha_K).
$$ 
Moreover, in view of $F_1(0)=G_1(0)=1$, this implies that 
$$F_1(s)=\hat\mu_1(s, 0),  \ s\in\mathbb{R}, \ \quad 
G_1(l)=\hat\mu_1(0, l), \  l\in \mathrm{Ker}(I+\widetilde\alpha_K).$$  
Hence
\begin{equation}\label{3m11.04.1}
\hat\mu_1(s, l)=\hat\mu_1(s, 0)\hat\mu_1(0, l), 
\quad s\in \mathbb{R}, \ l\in \mathrm{Ker}(I+\widetilde\alpha_K).
\end{equation} 

Substituting $s_1=s$, $s_2=-s$, $l_1= l_2=l$ in equation (\ref{1m11.04.1}) 
and arguing similarly 
we receive that 
\begin{equation}\label{4m11.04.1}
\hat\mu_2(s, l)=\hat\mu_2(s, 0)\hat\mu_2(0, l), 
\quad s\in \mathbb{R}, \ l\in \mathrm{Ker}(I+\widetilde\alpha_K).
\end{equation} 
In view of (\ref{10mm11.04.1}) and (\ref{6mm11.04.1}), we get from
(\ref{3m11.04.1}) and (\ref{4m11.04.1}) that
\begin{equation}\label{9m11.04.1}
\hat\mu_j(s, l)=\hat\gamma_j(s)\hat\omega(l), 
\quad s\in \mathbb{R}, \ l\in \mathrm{Ker}(I+\widetilde\alpha_K), \ j=1, 2.
\end{equation}

Substituting $l_1=l_2=l$, where $l_1, l_2\in\mathrm{Ker}(I-\widetilde\alpha_K)$ 
in equation
 (\ref{nnn11.04.1}), we get
\begin{equation*}\label{nnn11.04.1n}
\hat\mu_1(0, 2l)\hat\mu_2(0, 2l)=1, \quad 
 l \in \mathrm{Ker}(I-\widetilde\alpha_K).
\end{equation*}
Since $f_2$ is a topological automorphism of any closed subgroup of
the group $L$, we obtain from here
\begin{equation*}\label{m11.04.1n}
|\hat\omega(l)|=|\hat\mu_1(0, l)|=|\hat\mu_2(0, l|)=1, \quad 
 l \in \mathrm{Ker}(I-\widetilde\alpha_K).
\end{equation*}
Taking into account (\ref{nne20.35}) and (\ref{6mm11.04.1}), this 
implies that $\mathrm{Ker}(I-\widetilde\alpha_K)=\{0\}$.
Hence $I-\widetilde\alpha_K$ is a topological automorphism of 
the group $L$.
Moreover, $f_2$ is also a topological automorphism of the group $L$. 
Inasmuch as any closed subgroup of the group $L$ is characteristic,
it follows from this that $\widetilde\alpha_K$, $I-\widetilde\alpha_K$, 
and $f_2$ are one-to-one mappings of the subgroup 
$\mathrm{Ker}(I+\widetilde\alpha_K)$ onto itself.
This implies that $\widetilde\alpha_K$, $I-\widetilde\alpha_K$, 
and $f_2$ are one-to-one mappings of the set $L\setminus\mathrm{Ker}(I+\widetilde\alpha_K)$ 
onto itself.

Put $s_1=-as$, $s_2=s$, $l_1=\widetilde\alpha_Kl$, $l_2=l$, where
$l\notin\mathrm{Ker}(I+\widetilde\alpha_K)$, in equation (\ref{nn11.04.1}). 
We get
 \begin{multline}\label{5m11.04.1}
\hat\mu_1((1-a)s, (I+\widetilde\alpha_K)l)
\hat\mu_2(0, 2\widetilde\alpha_Kl)\\
=\hat\mu_1(-(1+a)s, -(I-\widetilde\alpha_K)l)\hat\mu_2(-2as, 0), 
\quad s\in \mathbb{R}, \ l\notin \mathrm{Ker}(I+\widetilde\alpha_K).
\end{multline} 
Inasmuch as $2\widetilde\alpha_Kl\notin\mathrm{Ker}(I+\widetilde\alpha_K)$, 
we obtain from (\ref{6mm11.04.1}) that 
$\hat\mu_2(0, 2\widetilde\alpha_Kl)=0$ and
for this reason 
the left-hand side of equation (\ref{5m11.04.1}) is equal to zero. 
Hence the right-hand side of equation (\ref{5m11.04.1}) is also 
equal to zero. It follows from (\ref{10mm11.04.1}) that 
$\hat\mu_2(-2as, 0)\ne 0$ for all  
$s\in \mathbb{R}$ and we conclude from (\ref{5m11.04.1}) that 
$\hat\mu_1(-(1+a)s, -(I-\widetilde\alpha_K)l)=0$. Since $a\ne -1$ 
and $I-\widetilde\alpha_K$ is
a one-to-one mapping of the set $L\setminus\mathrm{Ker}(I+\widetilde\alpha_K)$ 
onto itself, we get
\begin{equation}\label{7m11.04.1}
\hat\mu_1(s, l)=0, 
\quad s\in \mathbb{R}, \ l\notin \mathrm{Ker}(I+\widetilde\alpha_K).
\end{equation} 

Putting $s_1=s$, $s_2=-s$, $l_1=l_2=l$, where
$l\notin\mathrm{Ker}(I+\widetilde\alpha_K)$, 
in equation   (\ref{nn11.04.1}) and arguing similarly we get
\begin{equation}\label{8m11.04.1}
\hat\mu_2(s, l)=0, 
\quad s\in \mathbb{R}, \ l\notin \mathrm{Ker}(I+\widetilde\alpha_K).
\end{equation}
Taking into account (\ref{6mm11.04.1}) and (\ref{9m11.04.1}),  
we conclude from 
(\ref{7m11.04.1}) and (\ref{8m11.04.1})  that 
$$\hat\mu_j(s, l)=\hat\gamma_j(s)\hat\omega(l), 
\quad s\in \mathbb{R}, \ l\in L.$$ 
Hence $\mu_j=\gamma_j*\omega$, $j=1, 2$, i.e., statement 1(i) is proved. 
Thus, in the case when $a\ne -1$ the theorem is proved.

2. Assume now that $a=-1$, i.e.,
$\alpha=(-1, \alpha_K)$.  The proof of the theorem 
in this case is carried out according to the same scheme as the proof
of items 3 and 4 of Theorem 2.1 in \cite{F_Heyde_new}.

Put $s_1=s_2=0$ in equation (\ref{nn11.04.1}). 
Considering the resulting equation and applying Lemmas \ref{lem11}
and  \ref{lem1} to the group $K$, we can  suppose that
\begin{equation}\label{nnn11.04.10}
\{l\in L:|\hat\mu_1(0, l)|=|\hat\mu_2(0, l)|=1\}=\{0\}.
\end{equation}
We will prove that in this case statements 2(i)--2(iv) are true, if we put $G=K$.

Since $(I+\alpha)(X)=(I+\alpha_K)(K)$, it follows from Theorem \ref{tn1}
applying to the group $K$ that $(I+\alpha)(\mathbb{R}\times K)$
is a finite group. Hence statement  2(ii) is true.

Taking into account (\ref{nnn11.04.10}), it follows from Corollary \ref{nnco1} that  
$I-\alpha_K\in \mathrm{Aut}(K)$. 
Inasmuch as $\alpha=(-1, \alpha_K)$, 
this implies that $I-\alpha\in \mathrm{Aut}(X)$ and hence 
$I-\widetilde\alpha\in \mathrm{Aut}(Y)$. Put   $\nu_j=\mu_j*\bar\mu_j$. Then 
$\hat\nu_j(y)=|\hat\mu_j(y)|^2\ge 0$ for all   $y\in Y$, $j=1, 2$. 
Set  $f(y)=\hat\nu_1(y)$, $g(y)=\hat\nu_2(y)$, $y\in Y$.  
Since
the characteristic functions $\hat\mu_j(y)$   satisfy equation 
(\ref{11.04.1}), the characteristic functions   $\hat\nu_j(y)$ 
also satisfy equation   (\ref{11.04.1})  which takes the form
\begin{equation*} 
f(u+v)g(u+\widetilde\alpha  v)=f(u-v)g(u-\widetilde\alpha  v), \quad u, v\in Y.
\end{equation*}
Put  $\kappa=-f_4\widetilde\alpha(I-\widetilde\alpha)^{-2}$.   
Then   $\kappa\in \mathrm{Aut}(Y)$. It follows from   $\alpha=(-1, \alpha_K)$  
that
\begin{equation}\label{14.08.6}
\kappa=(1, -f_4\widetilde\alpha_K(I-\widetilde\alpha_K)^{-2}).
\end{equation}
  
We  will prove now that if  $f(y)\ne 0$, then  $y\in \mathrm{Ker}(I+\widetilde\alpha)$.
Note that 
$\mathrm{Ker}(I+\widetilde\alpha)=\mathbb{R}\times \mathrm{Ker}(I+\widetilde\alpha_K)$
and suppose that $f(y_1)\ne 0$ at  an element $y_1=(s_1, l_1)\in Y$.

First assume that   $l_1$ is an element of
finite order. It follows from (\ref{14.08.6}) and the fact 
that any closed subgroup of the group $L$ is characteristic 
that $\kappa^my_1=y_1$ for some $m$.
We see that all conditions of Lemma \ref{newle1},   where 
 $\beta=\widetilde\alpha$ and $y_0=y_1$,  are fulfilled. Hence
 (\ref{11.04.16})--(\ref{11.04.15}) hold.
Consider the subgroup $\langle  l_1 \rangle$ of the group $L$ generated by
the element $l_1$. Put  
$$
y_2=-2\widetilde\alpha(I-\widetilde\alpha)^{-1}y_1=
(s_1, -2\widetilde\alpha_K(I-\widetilde\alpha_K)^{-1} l_1)=(s_1, l_2),
$$
where $l_2=-2\widetilde\alpha_K(I-\widetilde\alpha_K)^{-1} l_1$. We have
$-f_2\widetilde\alpha_K(I-\widetilde\alpha_K)^{-1}\in\mathrm{Aut}(L)$. 
This implies that $l_2\in \langle l_1\rangle$. Moreover, the elements $l_1$ 
and $l_2$ have the same order  and hence $\langle l_1\rangle=\langle l_2\rangle$. 
Substituting $\beta=\widetilde\alpha$ and $y_0=y_1$ in (\ref{11.04.14}), we get
$g(y_2)\ne 0$. 
Put 
\begin{equation}\label{1nn11.04.15}
z_1=-(I+\widetilde\alpha)(I-\widetilde\alpha)^{-1} y_1=
(0, -(I+\widetilde\alpha_K)(I-\widetilde\alpha_K)^{-1}l_1).
\end{equation}
We find from   (\ref{11.04.16}) and (\ref{11.04.14}) that  then $f(z_1)=1$. 
By Lemma \ref{lem2}, $f(y)=1$ at each element  of the subgroup $\langle z_1\rangle$.
Put
\begin{equation}\label{1nnn11.04.15}
z_2=(I+\widetilde\alpha)(I-\widetilde\alpha)^{-1} 
y_2=(0, (I+\widetilde\alpha_K)(I-\widetilde\alpha_K)^{-1}l_2).
\end{equation}
Since $g(y_2)\ne 0$ and $\kappa^my_2=y_2$ for some $m$, 
substituting $\beta=\widetilde\alpha$ and $y_0=y_2$
in (\ref{11.04.15}), we find from  (\ref{11.04.8}) that  
$g(z_2)=1$. By Lemma \ref{lem2},
$g(y)=1$ at each element  of the subgroup $\langle z_2\rangle$.
As far as the elements $l_1$ and $l_2$ have the same order, 
it follows from (\ref{1nn11.04.15}) and  (\ref{1nnn11.04.15}) that the elements 
$z_1$ and $z_2$ also have the same order. 
Hence
$\langle z_1\rangle=\langle z_2\rangle$. 
Thus,  we proved that $f(y)=g(y)=1$ at each element of the subgroup
$\langle z_1\rangle=\langle z_2\rangle$. In view of $z_1, z_2\in L$, 
it follows from (\ref{nnn11.04.10}) that $z_1=0$. This implies that
$l_1\in \mathrm{Ker}(I+\widetilde\alpha_K)$ and hence 
$y_1\in  \mathrm{Ker}(I+\widetilde\alpha)$.

Assume now that   $l_1$ is an element of infinite order and 
$y_1\not\in  \mathrm{Ker}(I+\widetilde\alpha)$. 
Since $Y\setminus\mathrm{Ker}(I+\widetilde\alpha)$ is an open set and $f(y_1)\ne 0$,
we can choose a neighbourhood $U$ of the 
 element $y_1$ such that $U\subset Y\setminus\mathrm{Ker}(I+\widetilde\alpha)$
 and $f(y)\ne 0$ for all $y\in U$. It is obvious that there is an element
 $\tilde y=(\tilde s, \tilde l)\in U$ such that $\tilde l$ is an element of finite order.
 As proven above $\tilde y\in  \mathrm{Ker}(I+\widetilde\alpha)$.
 This implies that $y_1\in  \mathrm{Ker}(I+\widetilde\alpha)$.
  The obtained  contradiction shows that $y_1\in  \mathrm{Ker}(I+\widetilde\alpha)$.

Arguing similarly, we prove that if $g(y_1)\ne 0$ at  an element $y_1\in Y$, then
 $y_1\in \mathrm{Ker}(I+\widetilde\alpha)$.  
Thus, the characteristic functions $\hat\mu_j(y)$ are represented as follows 
\begin{equation}\label{nn11.04.16}
\hat\mu_j(y) = \begin{cases}\psi_j(y)  &\text{\ if\ }\ \ 
y\in \mathrm{Ker}(I+\widetilde\alpha), \\ 0& 
 \text{\ if\ }\ \ y\notin \mathrm{Ker}(I+\widetilde\alpha),
\\
\end{cases}
\end{equation}
where $\psi_j(y)$ are some characteristic functions on 
the group $\mathrm{Ker}(I+\widetilde\alpha)$. 
Considering the restriction of equation (\ref{11.04.1}) for the 
characteristic functions $\hat\mu_j(y)$ to the subgroup 
$\mathrm{Ker}(I+\widetilde\alpha)$ we are convinced that
 $\psi_1(y)=\psi_2(y)$ for all  $y\in \mathrm{Ker}(I+\widetilde\alpha)$. 
In view of
(\ref{nn11.04.16}), this implies that $\hat\mu_1(y) 
= \hat\mu_2(y)$ for all $y\in Y$.
Hence $\mu_1=\mu_2$. Put $\mu=\mu_1=\mu_2$. Thus, statement 2(i) is proved. 

Set $\psi(y)=\psi_1(y)=\psi_2(y)$. 
It follows from (\ref{nn11.04.16}) that the characteristic 
function $\hat\mu(y)$
is of the form (\ref{en10}). By statement  2(ii), $(I+\alpha)(X)$ is a finite group.
Taking into account the fact that 
$A(Y, (I+\alpha)(X))=\mathrm{Ker}(I+\widetilde\alpha)$ and (\ref{fe22.1}), 
the characteristic function $\widehat m_{(I+\alpha)(X)}(y)$
is of the form (\ref{d2}).  This implies that
$\hat\mu(y)=
\widehat m_{(I+\alpha)(X)}(y)\hat\mu(y)$ for all $y\in Y$.
Hence $\mu=m_{(I+\alpha)(X)}*\mu$, i.e.,
the Haar distribution $m_{(I+\alpha)(X)}$ is a factor of the 
 distribution $\mu$.  Statement 2(iii) is proved.  
 
 Statement 2(iv) follows from statement 2(i).
Thus, in the case when $a=-1$ the theorem is proved. 
Hence the theorem is  completely proved. 
$\hfill\Box$

\medskip

\noindent\textbf{ Acknowledgements}

\medskip

 I thank the reviewers for a careful reading of the manuscript and useful remarks.

\medskip

\noindent\textbf{Funding} The author  has not disclosed any funding.
 
\medskip

\noindent\textbf{Data Availability Statement} Data sharing 
not applicable to this article as no datasets were
generated or analysed during the current study.

\medskip
 
\noindent\textbf{Declarations}

\medskip

\noindent\textbf{Conflict of interest} The author states that there is 
no conflict of interest.

\medskip

\noindent B. Verkin Institute for Low Temperature Physics and Engineering\\
of the National Academy of Sciences of Ukraine\\
47, Nauky ave, Kharkiv, 61103, Ukraine

\medskip

\noindent e-mail:    gennadiy\_f@yahoo.co.uk


\begin{thebibliography}{99}

\bibitem{A1}  Almira, J.M.: Characterization of polynomials as solutions of 
certain functional equations. J. Math. Anal. Appl.
\textbf{459},    1016--1028 (2018). https://doi.org/10.1016/j.jmaa.2017.11.036

\bibitem{A2} Almira, J.M.: Using Aichinger's equation to characterize polynomial 
functions. Aequationes Math. \textbf{97}, 899--907 (2023). https://doi.org/10.1007/s00010-022-00926-x

\bibitem{AS1} Almira, J.M., Shulman, E.V.: On certain generalizations of the Levi--Civita 
and Wilson functional equations. Aequationes Math. \textbf{91},  921--931 (2017).
https://doi.org/10.1007/s00010-017-0489-4

\bibitem{F2004} Feldman, G.M.: On the Heyde theorem for finite Abelian groups. 
J. Theoret. Probab. \textbf{17},  929--941 (2004).  
https://doi.org/10.1007/s10959-004-0583-0


\bibitem{FeTVP} Feldman, G.M.: On a characterization theorem for probability distributions on discrete Abelian groups. Theory Probab. Appl.
\textbf{ 63}, 594--612 (2019). https://doi.org/10.1137/S0040585X97T989271

\bibitem{Rima}  Feldman, G.M.: Generalization of the Heyde theorem to some locally compact 
Abelian groups. Results in Mathematics. \textbf{77},   article 177, 1--20 
(2022). https://doi.org/10.1007/s00025-022-01719-z 

\bibitem{Febooknew} Feldman, G.:  {Characterization of 
Probability Distributions on Locally Compact Abelian Groups}.  
Mathematical Surveys and Monographs, \textbf{273}, 
American Mathematical Society, Providence, RI  
(2023) 

\bibitem{JFAA2024} Feldman, G.: Heyde theorem for locally compact Abelian groups 
containing no subgroups topologically isomorphic to the 2-dimensional torus. 
J. Fourier Anal.   Appl. \textbf{30},  article 34,  1--16   (2024).
https://doi.org/10.1007/s00041-024-10092-0

\bibitem{F_Heyde_new} Feldman, G.:  An analogue of   Heyde's  theorem for 
a certain class of compact totally disconnected Abelian groups and 
$p$-quasicyclic groups. Results in Mathematics.
\textbf{80},   article 63, 1--21 
(2025). https://doi.org/10.1007/s00025-025-02382-w 

\bibitem{Hewitt-Ross} Hewitt, E., Ross, K. A.: { Abstract
		Harmonic Analysis, \textbf{1}: Structure of Topological Groups, 
		Integration Theory, Group Representations}, 2nd ed., Grundlehren der Mathematischen 
		Wissenschaften, \textbf{115}, Springer-Verlag, New York--Berlin  (1994)
		    
\bibitem{He}   Heyde, C.C.: { Characterization of the normal low by the
		symmetry of a certain conditional distribution}. Sankhya,
	Ser. A.   \textbf{32},  115--118 (1970). http://www.jstor.org/stable/25049642  

\bibitem{KaLiRa} Kagan, A. M., Linnik, Yu. V., Rao C.R.:
 {Characterization Problems in Mathematical Statistics},
Wiley Series in Probability and Mathematical
Statistics, John Wiley $\&$ Sons, New York  (1973) 

\bibitem{M2023} Myronyuk, M.: The Heyde theorem on a group $\mathbb{R}^n\times D$,
where $D$
 is a discrete Abelian group. J. Theor. Probab. \textbf{36},  593--604 (2023).
 https://doi.org/10.1007/s10959-022-01168-y 
 
\bibitem{M2025} Myronyuk, M.: An analogue of the Klebanov theorem for locally compact Abelian groups. J. Theor. Probab. \textbf{37}, 2646--2664 (2024). 
https://doi.org/10.1007/s10959-024-01339-z

\bibitem{SaSh1} Sablik, M., Shulman, E.: Exponential rationals.
Results Math. \textbf{79}, article 35,   1--11 (2024). https://doi.org/10.1007/s00025-023-02060-9

\bibitem{Sh1} Shulman, E.V.: On and around the balanced Cauchy equation. 
Aequationes Math. \textbf{99},  1763--1772 (2025)  https://doi.org/10.1007/s00010-025-01155-8 

\end{thebibliography}
\end{document}